\def\LaTeX{\leavevmode L\raise.42ex
    \hbox{\kern-.3em\size{\sf@size}{0pt}\selectfont A}\kern-.15em\TeX}
\newcommand{\BibTeX}{{\rm B\kern-.05em{\sc
          i\kern-.025emb}\kern-.08em\TeX}}
\def\@currentlabel{2.1}\label{e:dispaa}
\def\@currentlabel{2.21}\label{e:dispau}
\def\@currentlabel{2.22}\label{e:dispav}
\def\@currentlabel{2.23}\label{e:dispaw}
\def\@currentlabel{2.24}\label{e:dispax}
\def\theequation{\thesection.\@arabic\c@equation}
\renewcommand{\theequation}{\arabic{section}.\arabic{equation}}
\newtheorem{theorem}{Theorem}[section]
\newtheorem{thm}{Theorem} [section]
\newtheorem{lem}{Lemma} [section]
\newtheorem{prop}{Proposition} [section]
\newtheorem{proof}{Proof}[section]
\renewcommand{\theequation}{\thesection.\arabic{equation}}
\renewcommand{\thesection}{\arabic{section}}
\renewcommand{\theequation}{\thesection.\arabic{equation}}
\let\ssection=\section\renewcommand{\section}{\setcounter{equation}{0}\ssection}
\begin{document}
\begin{frontmatter}
\title{A degenerate Kirchhoff-type problem  involving variable $s(\cdot)$-order fractional $p(\cdot)$-Laplacian  with weights}

\author[OMA,ALL]{Mostafa Allaoui \corref{cor1}}
\ead{m.allaoui@uae.ac.ma}
\author[mk0,mk1,mk2]{Mohamed Karim Hamdani}
\ead{hamdanikarim42@gmail.com}
\author[L]{Lamine Mbarki}
\ead{mbarki.lamine2016@gmail.com}

\begin{center}
\address[OMA]{Department of Mathematics, FSTH, Abdelmalek Essaadi University, Tetuan, Morocco.}
\address[ALL]{Department of Mathematics, FSO,  Mohammed I University,  Oujda, Morocco.}
\address[mk0]{Science and technology for defense lab LR19DN01, center for military research, military academy, Tunis, Tunisia.}
\address[mk1]{Military Aeronautical Specialities School, Sfax, Tunisia.}
\address[mk2]{Department of Mathematics, University of Sfax, Faculty of Science of Sfax, Sfax, Tunisia.}
\address[L]{Mathematics Departement, Faculty of Science of Tunis , University of Tunis El Manar, Tunisia.}

\end{center}

\begin{abstract}
This paper deals with a class of nonlocal variable $s(.)$-order fractional $p(.)$-Kirchhoff type equations:
\begin{eqnarray*}
\left\{
 \begin{array}{ll}
 \mathcal{K}\left(\int_{\mathbb{R}^{2N}}\frac{1}{p(x,y)}\frac{|\varphi(x)-\varphi(y)|^{p(x,y)}}{|x-y|^{N+s(x,y){p(x,y)}}} \,dx\,dy\right)(-\Delta)^{s(\cdot)}_{p(\cdot)}\varphi(x)
=f(x,\varphi)
 \quad \mbox{in }\Omega, \\
 \\
\varphi=0     \quad \mbox{on }\mathbb{R}^N\backslash\Omega.
\end{array}
\right.
\end{eqnarray*}
Under  some suitable conditions on the functions $p,s, \mathcal{K}$ and $f$, the existence and multiplicity of nontrivial  solutions for the above problem are obtained. Our results cover the degenerate case in the $p(\cdot)$ fractional setting.
\end{abstract}

\begin{keyword}
Variational methods;  $p(.)$-fractional Laplacian; Kirchhoff type equations

{\it 2010 Mathematics Subject Classification. 35A15, 35D30, 35J35, 35J60}
\end{keyword}
\end{frontmatter}

\section{Introduction}
 In this work, we investigate the following variable $s(.)$-order fractional $p(.)$-Kirchhoff type problem:
  \begin{eqnarray}\label{main}
\left\{
 \begin{array}{ll}
 \mathcal{K}\left(\int_{\mathbb{R}^{2N}}\frac{1}{p(x,y)}\frac{|\varphi(x)-\varphi(y)|^{p(x,y)}}{|x-y|^{N+s(x,y){p(x,y)}}} \,dx\,dy\right)(-\Delta)^{s(\cdot)}_{p(\cdot)}\varphi(x)
=f(x,\varphi)
 \quad \mbox{in }\Omega, \\
 \\
\varphi=0     \quad \mbox{on }\mathbb{R}^N\backslash\Omega,
\end{array}
\right.
\end{eqnarray}
where $\mathcal{K}$ is a model of Kirchhoff coefficient, $ \Omega$ is a smooth bounded domain in $\mathbb{R}^{N}$ and $f:\Omega\times\mathbb{R}\to \mathbb{R}$ is a continuous function specified later. The operator $(-\Delta)^{s(\cdot)}_{p(\cdot)}$ is called variable
$s(.)$-order fractional $p(.)$-Laplacian, given $s(.): \overline{\Omega}\times \overline{\Omega}\rightarrow(0,1)$ and  $p(.): \overline{\Omega}\times \overline{\Omega}\rightarrow(1,+\infty)$ with $N > s(x,y)p(x, y)$ for all $(x, y)\in \overline{\Omega}\times \overline{\Omega}$, which can be
defined as
\begin{equation*}
  (-\Delta)^{s(\cdot)}_{p(\cdot)}\varphi(x):=P.V \int_{\mathbb{R}^N} \frac{|\varphi(x)-\varphi(y)|^{p(x,y)-2}(\varphi(x)-\varphi(y))}{|x-y|^{N+s(x,y){p(x,y)}}} \,dy, ~~x\in \mathbb{R}^N,
  \end{equation*}
  along any  $\varphi\in C_0^\infty (\mathbb{R}^N)$, where P.V. denotes the Cauchy principle value.\\

Throughout this paper, we make the following assumptions:
\begin{description}
   \item[(A1)] $\mathcal{K}: \mathbb{R}_0^+\to \mathbb{R}_0^+$ is a continuous function and satisfies the (polynomial growth) condition
\begin{equation*}
  k_1\zeta^{\theta-1}\leq \mathcal{K}(\zeta)\leq k_2\zeta^{\theta-1} ~~\text{for any} ~~ \zeta\geq0,
\end{equation*}
where $0<k_1\leq k_2$ are real numbers and exponent $1<\theta$;
\end{description}
and the variable exponent $p$ and the variable order $s$ admit the following statements:
\begin{itemize}
\item[\rm(S)] $s(x,y)$ is symmetric function, ie, $s(x,y)=s(y,x)$ and we have
$$0<s^{-}:=\inf_{(x,y)\in \overline{\Omega}\times \overline{\Omega}}s(x,y)\leq s^{+}:=\sup_{(x,y)\in \overline{\Omega}\times \overline{\Omega}}s(x,y)<1.$$
\item[\rm(P)] $p(x,y)$ is symmetric function, ie, $p(x,y)=p(y,x)$ and we have
$$1<p^{-}:=\inf_{(x,y)\in \overline{\Omega}\times \overline{\Omega}}p(x,y)\leq p^{+}:=\sup_{(x,y)\in \overline{\Omega}\times \overline{\Omega}}p(x,y)<\infty.$$
\end{itemize}
 For any $x\in\mathbb R^N$, we denote
$$
\overline{p}(x):=p(x,x), \quad \overline{s}(x):=s(x,x).
$$

 In recent years, a great deal of attention has been paid to the study of problems
involving fractional $p, p(.)$--Laplacian, both in the pure mathematical research
and in the concrete real-world applications, such as, optimization, finance,
continuum mechanics, phase transition phenomena, population dynamics,
and game theory, see \cite{AHKC,ASSA,BR,BFL,Ch,FP,HZCR,MT,ZAF,ZYL} and references therein.\\

In \cite{AAA}, using variational methods, the authors studied a nonlocal $p(x)$--Kirchhoff problem:
\begin{eqnarray*}
\left\{
 \begin{array}{ll}
 M\left(\int_{\Omega}\frac{|\nabla u|^{p(x)}}{p(x)} \,dx\right)(-\Delta_{p(x)}u)
=f(x,u)
 \quad \mbox{in }\Omega, \\
 \\
u=0     \quad \mbox{on }\partial\Omega,
\end{array}
\right.
\end{eqnarray*}
and have obtained the existence and multiplicity of the above problem, under appropriate assumptions on $f$ and $M$.\\

In \cite{ABS}, by direct variational approach and Ekeland's variational principle, Azroul et al investigate the existence of nontrivial weak solutions for the following problem:
\begin{eqnarray*}
\left\{
 \begin{array}{ll}
 M\left(\int_{\mathcal{Q}}\frac{1}{p(x,y)}\frac{|u(x)-u(y)|^{p(x,y)}}{|x-y|^{N+s{p(x,y)}}} \,dx\,dy\right)(-\Delta_{p(x)})^{s}u(x)
=\lambda|u(x)|^{r(x)-2}u(x)
 \quad \mbox{in }\Omega, \\
 \\
u=0     \quad \mbox{on }\mathbb{R}^N\backslash\Omega.
\end{array}
\right.
\end{eqnarray*}

In \cite{XZY}, the authors considered a multiplicity result for a Schr\"{o}dinger equation driven by the variable $s(.)$-order fractional Laplace operator via variational methods. However, the authors in \cite{WZ}, investigated the existence of infinitely many solutions for a kind of Kirchhoff-type variable $s(.)$-order problem by using four different critical point theorems.\\

Inspired by the above results, we study problem \eqref{main} in the case when the function $\mathcal{K}$ is singular at zero and $f$ involves indefinite weight functions.

\section{Functional setting and preliminaries}
In this part,  we briefly review some basic knowledge,  properties of Lebesgue spaces with
variable exponents. For more details, the reader can refer to \cite{Die, Fan1} and the references therein.
For this aim, set
$$
C_+(\Omega):=\{g:g\in C(\overline{\Omega})\ \text{and}\ g(x)>1,
\forall x\in\overline{\Omega}\}.
$$
For $g(\cdot)\in C_+(\Omega)$,  the variable exponent Lebesgue
space $L^{g(\cdot)}(\Omega)$ is defined by
$$
L^{g(\cdot)}(\Omega):=\{w:\Omega\to\mathbb{R}\text{ measurable and }
 \int_{\Omega}|w(x)|^{g(x)}dx<\infty\}.
$$
This space is endowed with the so-called Luxemburg norm given by
$$
\| w \|_{L^{g(x)}(\Omega)}=|w|_{g(\cdot)}:=\inf\{\delta>0:\int_{\Omega}|\frac{w(x)}{\delta}|^{g(x)}dx\leq1\}
$$
and $( L^{g(\cdot)}(\Omega),|w|_{g(\cdot)})$ becomes a Banach space, and we
call it variable exponent Lebesgue space.
\begin{prop}\cite{Fan1} \label{prop2.1} \quad
For every  $\varphi\in L^{g(x)}(\Omega)$ and $\psi\in L^{r(x)}(\Omega)$, we have
\[
\left|\int_{\Omega}\varphi\psi\,dx\right|\leq
\left(\frac{1}{g^{-}}+\frac{1}{r^{-}}\right)|\varphi|_{g(x)}|\psi|_{r(x)},
\]
 where $r,g\in C_+(\Omega)$ and $1/r(x)+1/g(x)=1$.\\
Moreover, if $g_1, g_2, g_3\in C_+(\overline{\Omega})$ and $\frac{1}{g_1(x)}+\frac{1}{g_2(x)}+\frac{1}{g_3(x)}=1$, then for any $\varphi\in L^{g_1(x)}(\Omega)$, $\chi\in L^{g_2(x)}(\Omega)$ and $\psi\in L^{g_3(x)}(\Omega)$ the following inequality holds:
 \begin{equation}\label{eh}
    \int_{\Omega}|\varphi\chi\psi|dx\leq \left(\frac{1}{g_1^-}+\frac{1}{g_2^-}+\frac{1}{g_3^-}\right)| \varphi |_{g_1(x)}| \chi |_{g_2(x)}| \psi|_{g_3(x)}.
 \end{equation}
\end{prop}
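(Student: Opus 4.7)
The plan is to establish both inequalities by combining a pointwise Young-type inequality with the defining property of the Luxemburg norm, namely that for any nonzero $w \in L^{g(\cdot)}(\Omega)$ one has $\int_\Omega \left(|w(x)|/|w|_{g(\cdot)}\right)^{g(x)}dx \le 1$. This unit-modular fact is what lets us normalize and trade the awkward variable-exponent structure for a clean bound.

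For the first inequality, I would assume $\varphi,\psi$ are both nontrivial (otherwise the statement is trivial) and introduce the normalized functions $\widetilde\varphi(x) = \varphi(x)/|\varphi|_{g(\cdot)}$ and $\widetilde\psi(x) = \psi(x)/|\psi|_{r(\cdot)}$. The classical pointwise Young inequality for conjugate exponents gives
\[
|\widetilde\varphi(x)\widetilde\psi(x)| \;\le\; \frac{|\widetilde\varphi(x)|^{g(x)}}{g(x)} + \frac{|\widetilde\psi(x)|^{r(x)}}{r(x)} \quad \text{a.e.\ on }\Omega.
\]
Integrating over $\Omega$, using $g(x)\ge g^-$ and $r(x)\ge r^-$ to bound the denominators from below, and applying the modular inequality $\int_\Omega |\widetilde\varphi|^{g(x)}dx \le 1$ and the analogous one for $\widetilde\psi$, yields
\[
\int_\Omega |\widetilde\varphi\,\widetilde\psi|\,dx \le \frac{1}{g^-} + \frac{1}{r^-},
\]
and multiplying through by $|\varphi|_{g(\cdot)}|\psi|_{r(\cdot)}$ gives the first claim.

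For the three-function version \eqref{eh}, the cleanest route is the three-term Young inequality: from the weighted AM--GM inequality $x^\alpha y^\beta z^\gamma \le \alpha x + \beta y + \gamma z$ (valid when $\alpha+\beta+\gamma=1$ with $\alpha,\beta,\gamma\ge 0$), taking $\alpha = 1/g_1(x)$, $\beta = 1/g_2(x)$, $\gamma = 1/g_3(x)$ and $x=a^{g_1(x)}$, $y=b^{g_2(x)}$, $z=c^{g_3(x)}$ one obtains the pointwise bound
\[
abc \;\le\; \frac{a^{g_1(x)}}{g_1(x)} + \frac{b^{g_2(x)}}{g_2(x)} + \frac{c^{g_3(x)}}{g_3(x)}.
\]
Applying this with $a=|\varphi|/|\varphi|_{g_1(\cdot)}$, $b=|\chi|/|\chi|_{g_2(\cdot)}$, $c=|\psi|/|\psi|_{g_3(\cdot)}$, then integrating and again invoking the unit-modular property together with $g_i(x)\ge g_i^-$, produces exactly the constant $1/g_1^- + 1/g_2^- + 1/g_3^-$.

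The only real subtlety is obtaining the sharp-looking constants $1/g^- + 1/r^-$ and $1/g_1^- + 1/g_2^- + 1/g_3^-$ rather than the weaker bounds that come from iterating the two-function Hölder inequality; this is why the direct pointwise Young (respectively three-fold Young) inequality is the right tool, since the constants $1/g(x),1/r(x)$ are not integrated against the measure but only needed to be bounded uniformly. Once one sees the normalization trick, the rest is bookkeeping.
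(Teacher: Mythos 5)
Your argument is correct: the normalization by the Luxemburg norm, the pointwise (two-term and three-term) Young inequality, and the unit-modular property $\int_\Omega\bigl(|w|/|w|_{g(\cdot)}\bigr)^{g(x)}dx\le 1$ together give exactly the stated constants. The paper itself gives no proof (it cites the result from Fan--Zhao and Cekic--Mashiyev), and your argument is precisely the standard one used in those references, so there is nothing to compare beyond noting that your treatment is complete.
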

\begin{prop} \cite{Cekic} \label{lem23}
 Assume that $h\in L^\infty_+(\Omega)$, $g\in C_+(\overline{\Omega})$. If $|\chi|^{h(x)}\in L^{g(x)}(\Omega)$, then we have
 \begin{equation}\label{eh1}
      \min\left\{|\chi|_{h(x)g(x)}^{h^-},|\chi|_{h(x)g(x)}^{h^+}\right\}\leq \left||\chi|^{h(x)}\right|_{g(x)}\leq \max\left\{|\chi|_{h(x)g(x)}^{h^-},|\chi|_{h(x)g(x)}^{h^+}\right\}.
 \end{equation}
  \end{prop}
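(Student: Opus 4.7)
The plan is to reduce both sides of the desired inequality to an identity involving modulars, and then extract the bound pointwise via the elementary inequalities $\lambda^{h^-}\leq \lambda^{h(x)}\leq\lambda^{h^+}$ for $\lambda\geq 1$ (reversed for $\lambda\leq 1$). Set
\[
\lambda:=|\chi|_{h(x)g(x)},\qquad \mu:=\bigl||\chi|^{h(x)}\bigr|_{g(x)},
\]
and dispose first of the trivial case $\chi=0$ a.e. (where both sides vanish). Since $1<g^-\leq g^+<\infty$ and $1\leq h^-\leq h^+<\infty$, one knows from the basic theory of variable exponent Lebesgue spaces (the norm–modular relation in Fan–Zhao) that
\[
\int_{\Omega}\left|\frac{\chi(x)}{\lambda}\right|^{h(x)g(x)}dx=1,
\qquad
\int_{\Omega}\left|\frac{|\chi(x)|^{h(x)}}{\mu}\right|^{g(x)}dx=1.
\]
Rewriting the second identity yields $\displaystyle\int_\Omega \frac{|\chi(x)|^{h(x)g(x)}}{\mu^{g(x)}}\,dx=1$, while the first gives $\displaystyle\int_\Omega \frac{|\chi(x)|^{h(x)g(x)}}{\lambda^{h(x)g(x)}}\,dx=1$, so the two integrals coincide.

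Next I would split according to whether $\lambda\geq 1$ or $\lambda\leq 1$. Suppose $\lambda\geq 1$. Then $\lambda^{h^-g(x)}\leq\lambda^{h(x)g(x)}\leq\lambda^{h^+g(x)}$ pointwise. If one had $\mu>\lambda^{h^+}$, then $\mu^{g(x)}>\lambda^{h(x)g(x)}$ everywhere, making the left integral strictly smaller than the right one, contradicting their equality (the strict comparison uses that $\chi$ is nonzero on a set of positive measure). Hence $\mu\leq\lambda^{h^+}$. Symmetrically, assuming $\mu<\lambda^{h^-}$ forces the reverse strict inequality of integrals, so $\mu\geq\lambda^{h^-}$. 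For $\lambda\geq 1$ the quantities $\lambda^{h^-}$ and $\lambda^{h^+}$ are respectively the min and the max, so the bound
\[
\min\bigl\{\lambda^{h^-},\lambda^{h^+}\bigr\}\leq \mu\leq \max\bigl\{\lambda^{h^-},\lambda^{h^+}\bigr\}
\]
is exactly the claim \eqref{eh1}.

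The case $\lambda\leq 1$ is handled in the same way, only now $\lambda^{h^+g(x)}\leq\lambda^{h(x)g(x)}\leq\lambda^{h^-g(x)}$, so the roles of $h^-$ and $h^+$ swap; since the statement is written with $\min$ and $\max$ the two bounds merge into the single inequality asserted. The one delicate point — and really the only place to be careful — is justifying the modular identities $\rho_g(|\chi|^{h(\cdot)}/\mu)=1$ and $\rho_{hg}(\chi/\lambda)=1$; these require $\mu,\lambda>0$ and the standard continuity/dominated convergence argument based on $g^+,h^+<\infty$. Once this is in place, the proof is essentially a one-line pointwise comparison. I expect no serious obstacle beyond bookkeeping of the two monotonicity cases.
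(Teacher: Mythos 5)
Your argument is correct, and in fact the paper offers no proof to compare against: Proposition \ref{lem23} is simply quoted from \cite{Cekic}. What you wrote is essentially the standard derivation. The two modular identities you invoke are legitimate here because both exponents are bounded away from $1$ and $\infty$ ($g\in C_+(\overline\Omega)$ and $h\in L^\infty_+$ give $1<(hg)^-\le (hg)^+<\infty$), so the unit-ball property $\rho(w/\|w\|)=1$ holds for $w\neq 0$; and your separate treatment of $\chi=0$ and the case split on $\lambda\gtrless 1$ are exactly the right bookkeeping. One small streamlining worth noting: you do not actually need the equality of the two modulars nor the strict-inequality contradiction. Since $\min\{\lambda^{h^-},\lambda^{h^+}\}\le\lambda^{h(x)}\le\max\{\lambda^{h^-},\lambda^{h^+}\}$ pointwise for every $\lambda>0$, one gets directly
\begin{equation*}
\int_\Omega\Bigl(\tfrac{|\chi|^{h(x)}}{\max\{\lambda^{h^-},\lambda^{h^+}\}}\Bigr)^{g(x)}dx\le\int_\Omega\Bigl(\tfrac{|\chi|}{\lambda}\Bigr)^{h(x)g(x)}dx=1\le\int_\Omega\Bigl(\tfrac{|\chi|^{h(x)}}{\min\{\lambda^{h^-},\lambda^{h^+}\}}\Bigr)^{g(x)}dx,
\end{equation*}
and the norm--modular relation (modular $\le 1$ forces norm $\le\delta$, modular $\ge 1$ forces norm $\ge\delta$) yields both bounds at once, with no case distinction. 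Your version is nevertheless complete and correct as written.
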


In the present part, we give the variational setting of problem (1.1) and state important results to be used later.   We set $\mathcal{Q}:=\mathbb R^{2N}\setminus(C^\Omega_{R^{N}}\times C^\Omega_{R^{N}})$ and define the
 fractional Sobolev space with variable exponent as
\begin{equation*}
E :=\left\{v:\mathbb R^N \to\mathbb R:v_{|_\Omega}\in L^{\overline{p}(x)}(\Omega),
 \quad \int_{\mathcal{Q}}\frac{|v(x)-v(y)|^{p(x,y)}}{\eta^{p(x,y)}| x-y|^{N+s(x,y)p(x,y)}}
 \,dx\,dy<\infty,\text{ for some } \eta>0\right\}.
 \end{equation*}
The space ${E}$ is equipped with the  norm
$$\| v\|_{ {E}}:=\| v\|_{L^{\overline{p}(x)}(\Omega)}
 +[v]_{E},$$
where $[v]_{E}$ is the seminorm defined as follows
$$
[v]_{E}=\inf\left\{\eta>0:\int_{\mathcal{Q}}\frac{|
 v(x)-v(y)|^{p(x,y)}}{\eta^{p(x,y)}| x-y |^{N+s(x,y)p(x,y)}}\,dx\,dy<1\right\}.
$$
Then $(E,\|\cdot\|_{E})$ is a separable reflexive Banach space.\\
 Now, let define the subspace ${E}_0$ of ${ E}$ as
$$
 E_0  :=\left\{v\in { E}: v=0\text{ a.e.\ in }C^\Omega_{R^{N}}\right\},
$$
with the norm on $E_0$
\[
 \| v\|_{{ E}_0}=[v]_{E}.
\]
\begin{prop}\label{prp 3.3}\cite{BT}
Let $s(\cdot)$ and $p(\cdot)$  satisfy {\rm (S)} and {\rm (P)}  with $s(x,y)p(x,y)<N$ for any $(x,y)\in \overline{\Omega}\times\overline{\Omega}$.  Then for any $g\in C_+(\overline{\Omega})$  such that $1<g^-\leq g(x)< p_s^*(x):=\frac{N\overline{p}(x)}{N-\overline{s}(x)\overline{p}(x)}$ for any
$x\in \overline{\Omega}$, there exits a positive constant $C=C(N,s,p,g,\Omega)$ such that
\begin{equation*}
  \| w \|_{L^{g(x)}(\Omega)}\leq C \| w \|_{{ E}_0},
\end{equation*}
for every $w\in{ E}_0$. Moreover, the embedding $E_0\hookrightarrow L^{g(x)}(\Omega)$is compact.
\end{prop}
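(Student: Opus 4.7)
The strategy is to imitate the classical proof of the fractional Sobolev embedding and bypass the variable nature of $s$ and $p$ by a localisation-plus-covering argument that reduces the problem to the well-known constant-exponent embedding $W^{s_{0},p_{0}}_{0}(\Omega)\hookrightarrow L^{q}(\Omega)$ for $q\leq \tfrac{Np_{0}}{N-s_{0}p_{0}}$. Since all quantities are written modularly, Proposition~\ref{lem23} will be the bridge between modular estimates and norm estimates.

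\textbf{Step 1 (constant-exponent comparison).} First I would show the weaker, auxiliary embedding $E_{0}\hookrightarrow L^{q_{0}}(\Omega)$ for some fixed $q_{0}>p^{-}$ by picking constants $s_{0}<s^{-}$ and $p_{0}=p^{-}$ and using the elementary bound $|t|^{p_{0}}\le 1+|t|^{p(x,y)}$ (on the region where $|t|\geq 1$, and a trivial bound where $|t|\leq 1$) to control the Gagliardo seminorm $[w]_{s_{0},p_{0}}$ by $1+[w]_{E}$; the classical fractional Sobolev embedding then yields the auxiliary bound. This alone is too weak because $q_{0}$ need not exceed $p_s^{*}(x)$, but it gives a priori control in a fixed Lebesgue space.

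\textbf{Step 2 (localisation to the sharp exponent).} By continuity of $\overline{s},\overline{p},g$ on $\overline{\Omega}$ and the strict pointwise inequality $g(x)<p_{s}^{*}(x)$, uniform continuity provides an $\varepsilon>0$ and a finite open cover $\{B_{i}\}_{i=1}^{m}$ of $\overline{\Omega}$ by small balls such that on each $B_{i}$ the oscillations of $\overline{s}$ and $\overline{p}$ are smaller than $\varepsilon$. Set $p_{i}:=\inf_{B_{i}}\overline{p}$ and $s_{i}:=\inf_{B_{i}}\overline{s}$; the cover can be chosen fine enough that $g(x)<\tfrac{Np_{i}}{N-s_{i}p_{i}}$ for every $x\in B_{i}$. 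On each $B_{i}$ the constant-order seminorm $[w]_{s_{i},p_{i},B_{i}\times B_{i}}$ is dominated by the full variable seminorm $[w]_{E}$ (splitting the integrand according to whether $|w(x)-w(y)|\lessgtr 1$ and using $s(x,y)p(x,y)\geq s_{i}p_{i}$ on $B_{i}\times B_{i}$ together with the bounded diameter $|x-y|$), and the classical embedding $W^{s_{i},p_{i}}(B_{i})\hookrightarrow L^{g(x)}(B_{i})$ becomes available. Summing over $i$ and appealing to Proposition~\ref{lem23} to convert the resulting modular bound $\int_{\Omega}|w|^{g(x)}\,dx\leq C(1+[w]_{E}^{p^{+}})$ into the norm inequality $\|w\|_{L^{g(x)}(\Omega)}\leq C\|w\|_{E_{0}}$ completes the continuous embedding.

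\textbf{Step 3 (compactness and main obstacle).} For compactness I would choose an intermediate exponent $\tilde g$ with $g(x)<\tilde g(x)<p_{s}^{*}(x)$, apply Step~2 to obtain the continuous embedding $E_{0}\hookrightarrow L^{\tilde g(x)}(\Omega)$, and then use the compactness of $L^{\tilde g(x)}(\Omega)\hookrightarrow L^{g(x)}(\Omega)$ (which follows from a Fréchet--Kolmogorov-type criterion together with interpolation between $L^{1}$ and $L^{\tilde g(x)}$). Alternatively one may verify the Fréchet--Kolmogorov conditions directly: equi-integrability is immediate from the uniform bound in $L^{\tilde g(x)}$, and equi-continuity of translations is extracted from the finiteness of the Gagliardo seminorm by a standard argument on fractional differences. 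The main technical obstacle is Step~2: carefully bookkeeping the tails of the Gagliardo integral outside the chosen balls, and making sure that the cover is fine enough so that the exponents $(s_{i},p_{i})$ stay below the pointwise critical threshold while the constants in the local embeddings remain uniformly bounded in $i$.
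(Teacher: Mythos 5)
First, a point of context: the paper does not prove this proposition at all --- it is imported verbatim from the reference [BT] (Biswas--Tiwari), so there is no in-paper proof to compare against. Your architecture (a constant-exponent auxiliary embedding, a finite cover of $\overline\Omega$ on which the oscillations of $\overline s$, $\overline p$, $g$ are small, local classical embeddings $W^{s_i,p_i}(B_i)\hookrightarrow L^{q_i}(B_i)$, and Proposition~\ref{lem23} to pass from modular to norm estimates) is exactly the strategy used in that reference and in its antecedents (Kaufmann--Rossi--Vidal, Bahrouni--R\u{a}dulescu), so the route is the right one. Two places in your sketch, however, contain genuine errors as written.

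In Step 1, the inequality $|t|^{p_0}\le 1+|t|^{p(x,y)}$ must be applied to the difference quotient $t=|w(x)-w(y)|/|x-y|^{\sigma}$ for a suitable $\sigma$ with $s_0<\sigma\le s^-$, not to $t=|w(x)-w(y)|$ itself: applied to the raw difference it produces the remainder $\int_{|x-y|<1}|x-y|^{-(N+s_0p_0)}\,dx\,dy$, which diverges. With the quotient, the set $\{|t|\le 1\}$ contributes a kernel $|x-y|^{-N+(\sigma-s_0)p_0}$, and this is where the strict inequality $s_0<s^-$ that you correctly imposed is actually consumed; your sketch should make that explicit, since otherwise the "trivial bound where $|t|\le 1$" is not trivial but false. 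More seriously, in Step 3 the assertion that $L^{\tilde g(x)}(\Omega)\hookrightarrow L^{g(x)}(\Omega)$ is compact is wrong: inclusions between Lebesgue spaces over a bounded domain are never compact (take $u_n(x)=\sin(nx_1)$, bounded in every $L^q(\Omega)$, weakly null, but bounded away from $0$ in every $L^p$-norm), and no Fr\'echet--Kolmogorov criterion applied to the inclusion alone can repair this. The correct packaging of the ingredients you list is: a bounded sequence in $E_0$ is bounded in $L^{\tilde g(x)}(\Omega)$ by Step 2 and precompact in $L^{p^-}(\Omega)$ by the classical compact embedding $W^{s_0,p^-}_0(\Omega)\hookrightarrow\hookrightarrow L^{p^-}(\Omega)$ furnished by Step 1 (equivalently, by Fr\'echet--Kolmogorov applied to the sequence using the uniform Gagliardo bound); interpolation between $L^{p^-}$ and $L^{\tilde g(x)}$, using $g(x)<\tilde g(x)$, then upgrades the $L^{p^-}$-convergent subsequence to convergence in $L^{g(x)}(\Omega)$. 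With these two repairs the argument closes.
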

Let us set the fractional modular function
   $\rho_{s,p}:{ E}_0\to  \mathbb R$ as
 \begin{equation} \label{modular}
 \rho_{s,p}(w):=\int_{\mathcal{Q}}\frac{|
 w(x)-w(y)|^{p(x,y)}}{| x-y |^{N+s(x,y)p(x,y)}}\,dx\,dy.
\end{equation}
\begin{prop}\label{lem 3.1}\cite{BT}
 Let $w,w_m \in { E}_0$ and $\rho_{s,p}$ be defined as in \eqref{modular}.
 Then we have the following results:
 \begin{itemize}
 \item[(i)] $ \| w \|_{{ E}_0}<1$ $(=1;>1)$  if and only if $\rho_{s,p}(w)<1(=1;>1)$.

 \item[(ii)] If $\| w\|_{{ E}_0}>1$, then
 $ \| w\|_{{ E}_0} ^{p^{-}}\leq\rho_{s,p}(w)\leq\| w \|_{{ E}_0}^{p^{+}}$.

 \item[(iii)] If $\| w \|_{{ E}_0}<1$, then
 $\| w\|_{{ E}_0} ^{p^{+}}\leq\rho_{s,p}(w)\leq\| w\|_{{ E}_0}^{p^{-}}$.
  \item[(iv)] ${\lim_{m\to  \infty} }\| w_m - w \|_{{ E}_0} =0 \Leftrightarrow {\lim_{m\to  \infty}} \rho_{s,p}(w_m -w)=0$.
 \end{itemize}
\end{prop}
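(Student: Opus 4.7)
The plan is to reduce all four assertions to a single fact: the map
$\Phi_w(\eta):=\int_{\mathcal{Q}}\frac{|w(x)-w(y)|^{p(x,y)}}{\eta^{p(x,y)}|x-y|^{N+s(x,y)p(x,y)}}\,dx\,dy$
on $(0,\infty)$ is continuous, strictly decreasing (when $w\not\equiv 0$), tends to $+\infty$ as $\eta\to 0^+$ and to $0$ as $\eta\to\infty$, so by the intermediate value theorem there is a unique $\eta_{0}>0$ with $\Phi_w(\eta_{0})=1$, and by the very definition of the Luxemburg seminorm this $\eta_{0}$ is $\|w\|_{E_{0}}$. Continuity follows from dominated convergence applied on any compact subinterval of $(0,\infty)$, with the $\eta$-uniform majorant $\max(\eta^{-p^{+}},\eta^{-p^{-}})\cdot\rho_{s,p}(w)$ available because $\rho_{s,p}(w)<\infty$ by membership in $E_{0}$; the boundary limits follow from the bounds $\eta^{-p^{+}}\rho_{s,p}(w)\le \Phi_w(\eta)\le \eta^{-p^{-}}\rho_{s,p}(w)$ when $\eta\ge 1$ and the reverse when $\eta\le 1$; strict monotonicity is pointwise in $(x,y)$ and survives integration.

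From the identity $\Phi_w(\|w\|_{E_{0}})=1$ and strict monotonicity, part (i) is immediate: $\|w\|_{E_{0}}<1$ (resp.\ $=1$, $>1$) iff $\Phi_w(1)=\rho_{s,p}(w)$ is $<1$ (resp.\ $=1$, $>1$). For (ii), assume $\|w\|_{E_{0}}>1$; then for every $(x,y)\in\mathcal{Q}$ we have $\|w\|_{E_{0}}^{-p^{+}}\le\|w\|_{E_{0}}^{-p(x,y)}\le\|w\|_{E_{0}}^{-p^{-}}$, so multiplying $1=\Phi_w(\|w\|_{E_{0}})$ by $\|w\|_{E_{0}}^{p^{-}}$ and $\|w\|_{E_{0}}^{p^{+}}$ respectively and integrating yields $\|w\|_{E_{0}}^{p^{-}}\le\rho_{s,p}(w)\le\|w\|_{E_{0}}^{p^{+}}$. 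Part (iii) is the same argument with the direction of the exponent inequalities reversed, since $\|w\|_{E_{0}}<1$ forces $\|w\|_{E_{0}}^{-p^{-}}\le\|w\|_{E_{0}}^{-p(x,y)}\le\|w\|_{E_{0}}^{-p^{+}}$.

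Part (iv) is then purely algebraic. If $\|w_{m}-w\|_{E_{0}}\to 0$, the norms are eventually $<1$, so by (iii), $\rho_{s,p}(w_{m}-w)\le\|w_{m}-w\|_{E_{0}}^{p^{-}}\to 0$. Conversely, if $\rho_{s,p}(w_{m}-w)\to 0$, then eventually $\rho_{s,p}(w_{m}-w)<1$; by (i) the norms are then $<1$, so (iii) gives $\|w_{m}-w\|_{E_{0}}^{p^{+}}\le\rho_{s,p}(w_{m}-w)\to 0$.

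The genuinely delicate step is the first one, specifically the identity $\Phi_w(\|w\|_{E_{0}})=1$: the Luxemburg definition gives only $\Phi_w(\eta)<1$ for $\eta$ slightly larger than $\|w\|_{E_{0}}$, and one must invoke continuity at $\eta=\|w\|_{E_{0}}$ (via the dominated-convergence majorant above) to pass to the limit, together with strict monotonicity to exclude the possibility $\Phi_w(\|w\|_{E_{0}})<1$. Once this is in place, items (i)–(iv) are routine consequences of the two-sided bound $\eta^{-p^{+}}\le\eta^{-p(x,y)}\le\eta^{-p^{-}}$ (or its reverse), exactly as in the Musielak–Orlicz framework.
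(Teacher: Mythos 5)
Your proof is correct, and it is essentially the standard argument: the paper itself gives no proof of this proposition (it is imported from \cite{BT}), and the proof there, following Fan--Zhao, runs exactly through the unit-ball property $\rho_{s,p}(w/\|w\|_{E_0})=1$ together with the two-sided bound $\eta^{-p^{+}}\le\eta^{-p(x,y)}\le\eta^{-p^{-}}$, as you do. Two negligible points you may wish to make explicit: membership in $E_0$ only gives finiteness of the modular of $w/\eta$ for \emph{some} $\eta>0$, and one deduces $\rho_{s,p}(w)=\Phi_w(1)<\infty$ by the same $\max\{(\eta/1)^{p^{\pm}}\}$ comparison before invoking your dominated-convergence majorant; and the case $w=0$ (where $\Phi_w\equiv 0$ and no $\eta_0$ with $\Phi_w(\eta_0)=1$ exists) should be treated separately, though all four items are then trivial.
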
	
\begin{prop} \label{lem 3.3}\cite{BT}
 $(E_0,\|\cdot\|_{E_0})$ is a separable, reflexive and uniformly convex Banach space.
\end{prop}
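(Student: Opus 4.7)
The plan is to prove the three structural properties of $E_0$ (completeness, separability, reflexivity, and uniform convexity) by means of an isometric embedding of $E_0$ into a variable exponent Lebesgue space on $\mathcal{Q}$, and then transferring the properties from this well-understood ambient space to the closed image of $E_0$.

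More precisely, I would introduce the linear map $T:E_0\to L^{p(\cdot,\cdot)}(\mathcal{Q})$ defined by
\begin{equation*}
T(v)(x,y)\;:=\;\frac{v(x)-v(y)}{|x-y|^{\frac{N}{p(x,y)}+s(x,y)}}.
\end{equation*}
A direct computation shows that the modular of $T(v)$ in $L^{p(\cdot,\cdot)}(\mathcal{Q})$ equals $\rho_{s,p}(v)$, namely
\begin{equation*}
\int_{\mathcal{Q}}|T(v)(x,y)|^{p(x,y)}\,dx\,dy \;=\; \int_{\mathcal{Q}}\frac{|v(x)-v(y)|^{p(x,y)}}{|x-y|^{N+s(x,y)p(x,y)}}\,dx\,dy \;=\; \rho_{s,p}(v).
\end{equation*}
Combined with the very definition of the Luxemburg norm, this identity immediately gives $\|T(v)\|_{L^{p(\cdot,\cdot)}(\mathcal{Q})}=[v]_{E}=\|v\|_{E_0}$, so that $T$ is a linear isometry.

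Next, I would check that $(E_0,\|\cdot\|_{E_0})$ is a Banach space. Given a Cauchy sequence $(v_n)\subset E_0$, the sequence $(T(v_n))$ is Cauchy in $L^{p(\cdot,\cdot)}(\mathcal{Q})$, which is complete, and hence converges to some $w\in L^{p(\cdot,\cdot)}(\mathcal{Q})$. Using a diagonal/subsequence argument together with Proposition \ref{lem 3.1}(iv) (equivalence of norm convergence and modular convergence) and Fatou's lemma, one recovers a function $v\in E_0$ with $T(v)=w$ and $v_n\to v$ in $E_0$; in particular $T(E_0)$ is closed in $L^{p(\cdot,\cdot)}(\mathcal{Q})$. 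With completeness in hand, the isometric identification $E_0\cong T(E_0)$ reduces the remaining claims to well-known facts: since $1<p^{-}\leq p^{+}<\infty$, the space $L^{p(\cdot,\cdot)}(\mathcal{Q})$ is separable, reflexive and uniformly convex, and all three of these properties are inherited by closed subspaces. Consequently $E_0$ enjoys the same properties.

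The routine part is the isometry computation and the inheritance of separability/reflexivity from the ambient Lebesgue space. The main technical obstacle, in my view, is the verification that $T(E_0)$ is genuinely closed (equivalently, that $E_0$ is complete), because the target space carries only the Luxemburg norm and not the clean $L^p$ structure, so convergence has to be handled through the modular; here Proposition \ref{lem 3.1} is the decisive tool, together with passing to an a.e.-convergent subsequence to identify the limit as $T(v)$ for a measurable $v$ which vanishes outside $\Omega$ and therefore belongs to $E_0$.
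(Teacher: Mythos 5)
The paper itself gives no proof of this proposition---it is quoted from the reference [BT], where the argument is precisely the one you outline: the isometry $v\mapsto \bigl(v(x)-v(y)\bigr)/|x-y|^{\frac{N}{p(x,y)}+s(x,y)}$ onto a closed subspace of $L^{p(\cdot,\cdot)}(\mathcal{Q})$, followed by inheritance of separability, reflexivity and uniform convexity from the variable exponent Lebesgue space with $1<p^-\le p^+<\infty$. Your proposal is correct and takes the same route; the only step worth spelling out is the identification of the limit in the completeness argument, which follows either from the embedding $E_0\hookrightarrow L^{\overline{p}(x)}(\Omega)$ of Proposition \ref{prp 3.3} or, more directly, by evaluating the a.e.-convergent subsequence of $T(v_n)$ at points $(x,y)\in\mathcal{Q}$ with $y\notin\Omega$, where $v_n(y)=0$.
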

\begin{prop}\label{propS}
 For all $u, \varphi \in E_0$, we consider the operator $\mathcal{T}: E_0\to E_0^*$
 such that
 \begin{equation*}
   \langle \mathcal{T}(u),\varphi\rangle=\int_{\mathcal{Q}}\frac{|u(x)-u(y)|^{p(x,y)-2}(u(x)-u(y))(\varphi(x)-\varphi(y))}{|x-y|^{N+sp(x,y)}}dxdy.
 \end{equation*}
 Then, the following assertions hold:
 \begin{enumerate}
   \item [(i)] $ \mathcal{T}$ is a bounded and strictly monotone operator;
   \item [(ii)] $ \mathcal{T}$ is a mapping of type $(S^+)$, that is,
   \begin{equation*}
     \text{if}~u_n\rightharpoonup u ~\text{in}~ E_0 ~\text{and}~\limsup  \langle \mathcal{T}(u_n)-\mathcal{T}(u),u_n-u\rangle\leq 0, ~\text{then}~ u_n\to u ~\text{in}~ E_0.
   \end{equation*}
    \end{enumerate}
\end{prop}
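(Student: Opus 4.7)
The plan is to verify the three properties separately, using the variable-exponent machinery already collected in Propositions~\ref{prop2.1}--\ref{lem 3.1}.

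For the boundedness part of (i), I would factor the integrand defining $\langle\mathcal{T}(u),\varphi\rangle$ as the product
\[
\frac{|u(x)-u(y)|^{p(x,y)-1}}{|x-y|^{(N+s(x,y)p(x,y))(p(x,y)-1)/p(x,y)}}\cdot\frac{|\varphi(x)-\varphi(y)|}{|x-y|^{(N+s(x,y)p(x,y))/p(x,y)}}
\]
and apply the variable-exponent Hölder inequality of Proposition~\ref{prop2.1} with conjugate exponents $p'(x,y):=p(x,y)/(p(x,y)-1)$ and $p(x,y)$. Converting the resulting Luxemburg norms into powers of the modular $\rho_{s,p}$ via Proposition~\ref{lem23}, and these in turn into powers of $\|\cdot\|_{E_0}$ via Proposition~\ref{lem 3.1}, yields an estimate of the form
\[
|\langle\mathcal{T}(u),\varphi\rangle|\le C\,\max\bigl\{\|u\|_{E_0}^{p^{+}-1},\|u\|_{E_0}^{p^{-}-1}\bigr\}\,\|\varphi\|_{E_0},
\]
which gives the claimed boundedness.

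For strict monotonicity I would rely on the elementary pointwise inequality
\[
\bigl(|a|^{p-2}a-|b|^{p-2}b\bigr)(a-b)>0\qquad\text{whenever }a\ne b,\ p>1,
\]
applied with $a=u(x)-u(y)$, $b=v(x)-v(y)$, $p=p(x,y)$. The resulting integrand is nonnegative and strictly positive on a set of positive $2N$-measure whenever $u\ne v$ in $E_0$, so $\langle\mathcal{T}(u)-\mathcal{T}(v),u-v\rangle>0$.

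For the $(S^+)$ property, assume $u_n\rightharpoonup u$ in $E_0$ with $\limsup\langle\mathcal{T}(u_n)-\mathcal{T}(u),u_n-u\rangle\le 0$; by monotonicity this bracket actually tends to $0$. Split $\mathcal{Q}=\mathcal{Q}_1\cup\mathcal{Q}_2$ with $\mathcal{Q}_1=\{p(x,y)\ge 2\}$ and $\mathcal{Q}_2=\{1<p(x,y)<2\}$, and use the Simon inequalities
\[
\bigl(|a|^{p-2}a-|b|^{p-2}b\bigr)(a-b)\ge c_p\,|a-b|^{p}\qquad(p\ge 2),
\]
\[
\bigl(|a|^{p-2}a-|b|^{p-2}b\bigr)(a-b)\ge c_p\,\frac{|a-b|^{2}}{(|a|+|b|)^{2-p}}\qquad(1<p<2),
\]
with $a=u_n(x)-u_n(y)$, $b=u(x)-u(y)$, $p=p(x,y)$. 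The $\mathcal{Q}_1$-contribution to $\rho_{s,p}(u_n-u)$ is then controlled directly. On $\mathcal{Q}_2$, I would apply the variable-exponent Hölder inequality with conjugate exponents $2/p(x,y)$ and $2/(2-p(x,y))$, writing
\[
\frac{|a-b|^{p(x,y)}}{|x-y|^{N+s(x,y)p(x,y)}}=\left[\frac{|a-b|^{p(x,y)}}{(|a|+|b|)^{(2-p(x,y))p(x,y)/2}}\cdot\frac{1}{|x-y|^{\alpha}}\right]\cdot\left[(|a|+|b|)^{(2-p(x,y))p(x,y)/2}\cdot\frac{1}{|x-y|^{\beta}}\right],
\]
with $\alpha,\beta$ chosen so that both factors are controlled: the first by the Simon quotient and hence by the vanishing bracket, the second uniformly by the boundedness of $\{u_n\}$ in $E_0$ coming from weak convergence. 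This forces $\rho_{s,p}(u_n-u)\to 0$, and Proposition~\ref{lem 3.1}(iv) yields $u_n\to u$ in $E_0$. The main obstacle lies precisely in this $\mathcal{Q}_2$ step, where Simon's inequality and the variable-exponent Hölder step must be combined carefully, passing between modular and norm quantities via Propositions~\ref{prop2.1}--\ref{lem23} while keeping track of whether the relevant seminorms are above or below $1$.
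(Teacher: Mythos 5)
Your proposal follows essentially the same route as the paper: Simon's two inequalities give strict monotonicity, and the $(S^+)$ property is obtained by splitting $\mathcal{Q}$ into the regions $p(x,y)\ge 2$ and $1<p(x,y)<2$, handling the first directly and the second via the variable-exponent H\"older inequality with conjugate exponents $2/p(x,y)$ and $2/(2-p(x,y))$, then converting modular convergence into norm convergence through Proposition~\ref{lem 3.1}. The only difference is that you spell out the boundedness estimate via H\"older and Propositions~\ref{lem23}--\ref{lem 3.1}, where the paper simply declares it obvious; the argument is correct.
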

\begin{proof}
\begin{itemize}
  \item [(i)] Obviously, $\mathcal{T}$ is a bounded operator. Using  Simon's inequalities:
  \begin{equation}\label{Simon1}
    \left(|\xi|^{p-2}\xi -|\eta|^{p-2}\eta\right)\left(\xi-\eta\right)\geq \frac{1}{2^p}|\xi-\eta|^p~~\text{if}~p\geq2,
  \end{equation}
  \begin{equation}\label{Simon2}
    \left(|\xi|^{p-2}\xi -|\eta|^{p-2}\eta\right)\left(\xi-\eta\right)\left(|\xi|+|\eta|\right)^{2-p}\geq (p-1)|\xi-\eta|^2~~\text{if}~1<p<2,
  \end{equation}
for any $\xi,\eta\in \mathbb{R}^N$, we deduce that
  \begin{equation*}
    \langle \mathcal{T}(v)-\mathcal{T}(w),v-w\rangle>0~~\text{for}~v\neq w.
  \end{equation*}
Thus, $\mathcal{T}$ is strictly monotone.
  \item [(ii)]
Let $(u_n)$ be a sequence of $E_0$ such that
  \begin{equation*}
    u_n\rightharpoonup u ~\text{in}~ E_0 ~\text{and}~\limsup  \langle \mathcal{T}(u_n)-\mathcal{T}(u),u_n-u\rangle\leq 0.
  \end{equation*}
Then, from (i), we deduce that
  \begin{equation}\label{s+1}
    \lim\limits_{n\to +\infty}  \langle \mathcal{T}(u_n)-\mathcal{T}(u),u_n-u\rangle=0.
  \end{equation}
\end{itemize}
Put
$$
\mathcal{U}_p=\{(x,y)\in \mathcal{Q}:1<p(x,y)<2\},~~
\mathcal{V}_p=\{(x,y)\in \mathcal{Q}:p(x,y)\geq2\}.
$$
Let ($x,y)\in \mathcal{U}_p$ and $w_n=u_n-u$. Using \eqref{Simon2},  H\"{o}lder's inequality and Propositions \ref{lem23}, \ref{lem 3.1}, we get
\begin{equation}\label{s+0}
\begin{split}
  \int_{\mathcal{U}_p} & \frac{|w_n(x)-w_n(y)|^{p(x,y)}}{|x-y|^{N+s(x,y)p(x,y)}}dxdy \\
   &\leq \frac{1}{p^--1}\int_{\mathcal{U}_p} \left(\left[ \frac{|u_n(x)-u_n(y)|^{p(x,y)-2}(u_n(x)-u_n(y))(w_n(x)-w_n(y))}{|x-y|^{N+s(x,y)p(x,y)}}\right.\right.\\
   &\left.-\frac{|u(x)-u(y)|^{p(x,y)-2}(u(x)-u(y))(w_n(x)-w_n(y))}{|x-y|^{N+s(x,y)p(x,y)}}\right]^{\frac{p(x,y)}{2}}\\
   &\left.\times \left[\frac{|u_n(x)-u_n(y)|^{p(x,y)}+|u(x)-u(y)|^{p(x,y)}}{|x-y|^{N+s(x,y)p(x,y)}}\right]^{\frac{2-p(x,y)}{2}}\right)dxdy\\
   &\leq \frac{1}{p^--1}\int_{\mathcal{Q}} \left(\left[ \frac{|u_n(x)-u_n(y)|^{p(x,y)-2}(u_n(x)-u_n(y))(w_n(x)-w_n(y))}{|x-y|^{N+s(x,y)p(x,y)}}\right.\right.\\
   &\left.-\frac{|u(x)-u(y)|^{p(x,y)-2}(u(x)-u(y))(w_n(x)-w_n(y))}{|x-y|^{N+s(x,y)p(x,y)}}\right]^{\frac{p(x,y)}{2}}\\
   &\left.\times \left[\left(\frac{|u_n(x)-u_n(y)|^{p(x,y)}}{|x-y|^{N+s(x,y)p(x,y)}}\right)^{\frac{2-p(x,y)}{2}}+
   \left(\frac{|u(x)-u(y)|^{p(x,y)}}{|x-y|^{N+s(x,y)p(x,y)}}\right)^{\frac{2-p(x,y)}{2}}\right]\right)dxdy\\
   &= \frac{1}{p^--1}\int_{\mathcal{Q}}h_1^{\frac{p(x,y)}{2}}(x,y)\left(h_2^{\frac{2-p(x,y)}{2}}(x,y)+h_3^{\frac{2-p(x,y)}{2}}(x,y)\right)dxdy\\
   &\leq c \|h_1^{\frac{p(x,y)}{2}}\|_{L^{\frac{2}{p(x,y)}}(\mathcal{Q})}\left(\|h_2^{\frac{2-p(x,y)}{2}}\|_{L^{\frac{2}{2-p(x,y)}}(\mathcal{Q})}+
   \|h_3^{\frac{2-p(x,y)}{2}}\|_{L^{\frac{2}{2-p(x,y)}}(\mathcal{Q})}\right)\\
   &\leq c \left(\|h_1\|^{\frac{p^+}{2}}_{L^{1}(\mathcal{Q})}+\|h_1\|^{\frac{p^-}{2}}_{L^{1}(\mathcal{Q})}\right)
   \left(\|h_2\|^{\frac{2-p^+}{2}}_{L^{1}(\mathcal{Q})}+\|h_2\|^{\frac{2-p^+}{2}}_{L^{1}(\mathcal{Q})}+
   \|h_3\|^{\frac{2-p^+}{2}}_{L^{1}(\mathcal{Q})}+\|h_3\|^{\frac{2-p^-}{2}}_{L^{1}(\mathcal{Q})}\right),
\end{split}
\end{equation}
where
\begin{equation*}
  h_1(x,y)=\frac{|u_n(x)-u_n(y)|^{p(x,y)-2}(u_n(x)-u_n(y))(w_n(x)-w_n(y))}{|x-y|^{N+s(x,y)p(x,y)}}
   -\frac{|u(x)-u(y)|^{p(x,y)-2}(u(x)-u(y))(w_n(x)-w_n(y))}{|x-y|^{N+s(x,y)p(x,y)}},
\end{equation*}
\begin{equation*}
  h_2(x,y)=\frac{|u_n(x)-u_n(y)|^{p(x,y)}}{|x-y|^{N+s(x,y)p(x,y)}}~~\text{and}~ h_3(x,y)=\frac{|u(x)-u(y)|^{p(x,y)}}{|x-y|^{N+s(x,y)p(x,y)}}.
\end{equation*}
Then, by using \eqref{s+0} and Proposition \ref{lem 3.1}, we obtain
\begin{equation}\label{s+2}
\begin{split}
  \int_{\mathcal{U}_p} & \frac{|w_n(x)-w_n(y)|^{p(x,y)}}{|x-y|^{N+s(x,y)p(x,y)}}dxdy \\
   &\leq c \left(\langle \mathcal{T}(u_n)-\mathcal{T}(u),u_n-u\rangle^{\frac{p^+}{2}}+ \langle \mathcal{T}(u_n)-\mathcal{T}(u),u_n-u\rangle^{\frac{p^-}{2}}\right)\\
   &\times\left(\rho_{s,p}(u_n)^{\frac{2-p^+}{2}}+\rho_{s,p}(u_n)^{\frac{2-p^-}{2}}+\rho_{s,p}(u)^{\frac{2-p^+}{2}}+\rho_{s,p}(u)^{\frac{2-p^-}{2}}\right).
   \end{split}
\end{equation}
From \eqref{s+1} and \eqref{s+2}, we deduce that
\begin{equation}\label{s+3}
 \lim\limits_{n\to +\infty}\int_{\mathcal{U}_p}  \frac{|w_n(x)-w_n(y)|^{p(x,y)}}{|x-y|^{N+s(x,y)p(x,y)}}dxdy=0.
\end{equation}
For ($x,y)\in \mathcal{V}_p$. Using \eqref{Simon1},  H\"{o}lder's inequality and Propositions \ref{lem23}, \ref{lem 3.1}, we get
\begin{equation}\label{s+4}
  \int_{\mathcal{V}_p}\frac{|w_n(x)-w_n(y)|^{p(x,y)}}{|x-y|^{N+s(x,y)p(x,y)}}dxdy
   \leq 2^{p^+} \langle \mathcal{T}(u_n)-\mathcal{T}(u),u_n-u\rangle\to 0~\text{as}~n\to +\infty.
\end{equation}
 Then, thanks to \eqref{s+3} and \eqref{s+4}, we conclude
 $$
 \rho_{s,p}(w_n)\to 0~~\text{as} ~~n\to+\infty.
 $$
 Consequently, $u_n\to u$ in $E_0$.
\end{proof}

 To prove Theorem \ref{theo1.1}, we will use  Krasnoselskii's genus theory.  To this end, let us recall the  notion of genus and its basic properties, which can be found in \cite{Chang, Kra}.\\
Let $E$ be a real Banach space. We set
\begin{equation*}
  \mathcal{A}=\{Z\subset E\backslash\{0\}: Z ~\text{is compact and}~ Z=-Z\}.
\end{equation*}
Let $Z\in \mathcal{A} $ and $E=\mathbb{R}^k$. The genus $\gamma(Z)$ of $Z$ is defined by
\begin{equation*}
  \gamma(Z)=\min\{k\geq1: ~\text{there exists an odd continuous mapping~} h: Z\to\mathbb{R}^k\backslash\{0\} \}.
\end{equation*}
Moreover, if such function  does not exist then $\gamma(Z)=\infty$ and by convenience $\gamma(\emptyset)=0$. As a typical example of a set of genus $k$, we can mention a set homeomorphic to a $(k-1)$-dimensional sphere via an odd map.\\
\begin{lem}
Let $E=\mathbb{R}^k$ and $\partial\Omega$ be the boundary of an open, symmetric and bounded subset
$\Omega\subset \mathbb{R}^N$ with $0\in \Omega$. Then $\gamma(\partial\Omega) = N$.
\end{lem}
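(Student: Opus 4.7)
The plan is to prove the two inequalities $\gamma(\partial\Omega)\leq N$ and $\gamma(\partial\Omega)\geq N$ separately, using the definition of genus directly for the upper bound and the Borsuk--Ulam theorem for the lower bound. Throughout, the hypotheses $0\in\Omega$ open and $\Omega=-\Omega$ bounded will be used to ensure, respectively, that $0\notin\partial\Omega$, that symmetric maps are well defined on $\partial\Omega$, and that rays from the origin exit $\Omega$ in finite time.

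For the upper bound, note first that since $\Omega$ is open and $0\in\Omega$, one has $0\notin\partial\Omega$, so the set $\partial\Omega$ lies in $\mathbb{R}^N\setminus\{0\}$. Moreover $\partial\Omega$ is compact (closed and bounded) and symmetric (as $\Omega=-\Omega$ implies $\partial\Omega=-\partial\Omega$), so $\partial\Omega\in\mathcal{A}$. The inclusion map $i:\partial\Omega\to\mathbb{R}^N\setminus\{0\}$ is continuous and odd, which by definition of genus yields $\gamma(\partial\Omega)\leq N$.

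For the lower bound, argue by contradiction: assume $\gamma(\partial\Omega)\leq N-1$, so there exists a continuous odd map $h:\partial\Omega\to\mathbb{R}^{N-1}\setminus\{0\}$. To exploit this, I will build a continuous odd map $\phi:S^{N-1}\to\partial\Omega$ by a radial construction. For each $v\in S^{N-1}$, define
\begin{equation*}
t(v):=\sup\{t>0 : tv\in\Omega\},
\end{equation*}
which is finite by boundedness of $\Omega$ and positive because $0$ is an interior point; standard arguments show $t(v)v\in\partial\Omega$ and $v\mapsto t(v)$ is continuous. Setting $\phi(v):=t(v)v$ gives a continuous map from $S^{N-1}$ into $\partial\Omega$, and symmetry $\Omega=-\Omega$ yields $t(-v)=t(v)$, so $\phi(-v)=-\phi(v)$. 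Then $h\circ\phi:S^{N-1}\to\mathbb{R}^{N-1}\setminus\{0\}$ is a continuous odd map, contradicting the Borsuk--Ulam theorem. Hence $\gamma(\partial\Omega)\geq N$, completing the proof.

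The main subtlety I expect is in verifying that $\phi$ is well defined and continuous without assuming $\Omega$ is star-shaped with respect to $0$: the map $\phi$ need not be a homeomorphism in general, but this is not needed — only continuity and oddness of $\phi$ are required so that $h\circ\phi$ inherits both properties. Continuity of $t(\cdot)$ follows from the openness of $\Omega$ (upper semicontinuity) and the fact that the supremum is attained on a bounded set with $t(v)v\in\partial\Omega$ (lower semicontinuity), after which the Borsuk--Ulam contradiction closes the argument cleanly.
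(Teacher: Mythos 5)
The paper states this lemma without proof (it is imported from the genus-theory references), so your argument must stand on its own. Your upper bound $\gamma(\partial\Omega)\le N$ via the inclusion $\partial\Omega\hookrightarrow\mathbb{R}^{N}\setminus\{0\}$ is correct, as are the preliminary observations that $0\notin\partial\Omega$ and $\partial\Omega\in\mathcal{A}$. The lower bound, however, has a genuine gap: the radial map $\phi(v)=t(v)v$ is in general \emph{not} continuous, because $\Omega$ is not assumed star-shaped with respect to the origin. Openness of $\Omega$ yields only \emph{lower} semicontinuity of $t$ (if $t_0v_0\in\Omega$, then $t_0v\in\Omega$ for $v$ near $v_0$, so $t(v)\ge t_0$); upper semicontinuity — which you attribute to openness — can fail. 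Concretely, in $\mathbb{R}^2$ take
\begin{equation*}
\Omega=B(0,1)\cup\bigl\{(r\cos\vartheta,r\sin\vartheta):\ 2<r<3,\ \vartheta\in(-\tfrac{\pi}{4},\tfrac{\pi}{4})\cup(\tfrac{3\pi}{4},\tfrac{5\pi}{4})\bigr\},
\end{equation*}
which is open, bounded, symmetric and contains $0$. For $\vartheta$ strictly inside the sectors one has $t=3$, while at $\vartheta=\pi/4$ one has $t=1$, so $t$ jumps and $\phi$ is discontinuous. Hence $h\circ\phi$ is not a legitimate continuous odd map on $S^{N-1}$, and the Borsuk–Ulam contradiction does not close as written. (Whether some \emph{other} continuous odd map $S^{N-1}\to\partial\Omega$ exists is not addressed and is not obvious.)

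The standard repair runs in the opposite direction. Supposing an odd continuous $h:\partial\Omega\to\mathbb{R}^{N-1}\setminus\{0\}$ exists, extend it by the Tietze theorem to a continuous $\tilde h:\overline{\Omega}\to\mathbb{R}^{N-1}$ and replace $\tilde h$ by its odd part $\frac{1}{2}\bigl(\tilde h(x)-\tilde h(-x)\bigr)$, which is still continuous, odd, and agrees with $h$ on $\partial\Omega$, hence is nonvanishing there. Viewing $\tilde h$ as a map into $\mathbb{R}^{N}$ with range in the proper subspace $\mathbb{R}^{N-1}$, Borsuk's antipodal theorem gives that $\deg(\tilde h,\Omega,0)$ is odd, hence nonzero; but then $\tilde h(\Omega)$ must contain a full $\mathbb{R}^{N}$-neighborhood of $0$, contradicting $\tilde h(\overline{\Omega})\subset\mathbb{R}^{N-1}$. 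Equivalently, one may quote the generalized Borsuk–Ulam theorem directly for boundaries of bounded open symmetric neighborhoods of the origin. Your radial construction is fine (and classical) in the special case where $\Omega$ is star-shaped about $0$, e.g.\ a ball, but the lemma as stated needs the extension/degree argument.
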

Moreover, in order to prove Theorem \ref{theo1.1}, we use the following theorem due to Clarke \cite{DC}.
\begin{theorem}[\cite{DC}]\label{Clarke's theorem}
Let $\mathcal{H}\in C^{1}(E,\mathbb{R})$ be a functional satisfying the following conditions
\begin{itemize}
\item $(i)$ The functional $\mathcal{H}$ satisfies the $(PS)$ condition;
\item $(ii)$ $\mathcal{H}$ is bounded from below and even;
\item $(iii)$ there is a compact set $Z\in \mathcal{A}$ such that $\gamma(Z)=j$ and $\sup_{x\in Z}\mathcal{H}(x)<\mathcal{H}(0).$
\end{itemize}
Then $\mathcal{H}$ possesses at least $j$ pairs of distinct critical points, and their corresponding critical values  are less than $\mathcal{H}(0)$.
\end{theorem}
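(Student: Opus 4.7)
The plan is to apply the classical Lusternik--Schnirelman minimax principle adapted to Krasnoselskii's genus. First I would set, for $1 \leq k \leq j$,
\[
\Gamma_k := \{ Z \in \mathcal{A} : \gamma(Z) \geq k \}, \qquad c_k := \inf_{Z \in \Gamma_k} \sup_{x \in Z} \mathcal{H}(x).
\]
Since $\mathcal{H}$ is bounded from below, each $c_k$ is a finite real number, and $\Gamma_{k+1} \subset \Gamma_k$ forces $c_1 \leq c_2 \leq \cdots \leq c_j$. By assumption (iii) there exists $Z_0 \in \mathcal{A}$ with $\gamma(Z_0)=j$ and $\sup_{Z_0}\mathcal{H} < \mathcal{H}(0)$, whence $c_j \leq \sup_{Z_0}\mathcal{H} < \mathcal{H}(0)$; in particular all $c_k$ are strictly less than $\mathcal{H}(0)$.

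Next I would show every $c_k$ is a critical value of $\mathcal{H}$. The tool is an equivariant deformation lemma: because $\mathcal{H}$ is even, $\mathcal{H}'$ is odd, and a standard construction produces an odd pseudo-gradient flow $\eta_t$ that strictly decreases $\mathcal{H}$ away from the critical set. If $c_k$ were a regular value, one would pick a near-optimal $Z \in \Gamma_k$ with $\sup_Z \mathcal{H} < c_k+\varepsilon$, apply $\eta_t$ to push it below $c_k-\varepsilon$, and observe that $\eta_t(Z)$ is still compact, symmetric (by oddness of $\eta_t$), and satisfies $\gamma(\eta_t(Z))\geq \gamma(Z) \geq k$ by the monotonicity of $\gamma$ under odd continuous maps. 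This contradicts the definition of $c_k$. The (PS) condition guarantees that the pseudo-gradient flow is well-defined on the required sublevel bands.

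For the multiplicity, I would treat repeated minimax levels via the standard subadditivity of genus. Suppose $c := c_k = c_{k+1} = \cdots = c_{k+m-1}$ and let $K_c = \{x \in E: \mathcal{H}(x)=c,\ \mathcal{H}'(x)=0\}$. Since $c < \mathcal{H}(0)$, we have $0 \notin K_c$; by (PS) the set $K_c$ is compact, and by evenness of $\mathcal{H}$ it is symmetric, so $K_c \in \mathcal{A}$ and $\gamma(K_c)$ is well-defined. If one assumed $\gamma(K_c)\leq m-1$, there is a symmetric open neighborhood $\mathcal{N}$ of $K_c$ with $\gamma(\overline{\mathcal{N}}) = \gamma(K_c)$. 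Running the deformation on the complement of $\mathcal{N}$ and invoking $\gamma(A\cup B)\leq \gamma(A)+\gamma(B)$ one produces a set in $\Gamma_{k+m-1}$ with $\sup \mathcal{H} < c$, contradicting the definition of $c_{k+m-1}$. Hence $\gamma(K_c)\geq m$, so $K_c$ contains at least $m$ pairs $\{\pm x\}$ of critical points at level $c$. Summing across distinct levels of $c_1,\dots,c_j$ yields $j$ pairs in total, each with critical value strictly below $\mathcal{H}(0)$.

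The main obstacle is the construction of the odd decreasing deformation together with its accurate calibration: one needs the flow to commute with $x\mapsto -x$, to preserve sublevel sets, and to be defined on a neighborhood of the target band, all simultaneously. The compactness of $K_c$ furnished by the (PS) condition is what makes $\gamma(K_c)$ finite and makes the symmetric tubular neighborhood $\mathcal{N}$ available, so the subadditivity argument can be closed; this is where hypothesis (i) is genuinely used, while (ii) ensures the whole minimax scheme is well-posed and equivariant.
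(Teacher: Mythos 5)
The paper does not prove this statement at all: it is quoted verbatim from Clarke's 1972 paper (reference [DC]) and used as a black box in the proof of Theorem 3.1, so there is no internal argument to compare yours against. Your sketch is the standard Lusternik--Schnirelman/genus minimax proof of exactly this result, and its architecture is sound: the minimax levels $c_k$ over $\Gamma_k$ are finite and nonincreasing in the right order, assumption (iii) pins $c_j$ (hence all $c_k$, $k\le j$) strictly below $\mathcal{H}(0)$, the odd pseudo-gradient deformation (available because $\mathcal{H}$ is even and $C^1$ and satisfies (PS)) shows each $c_k$ is critical, and the subadditivity-plus-continuity properties of the genus handle repeated levels. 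Two small points deserve attention. First, in the multiplicity step the set you obtain after excising the neighborhood $\mathcal{N}$ of $K_c$ has genus at least $(k+m-1)-\gamma(\mathcal{N})\ge k$, so it lies in $\Gamma_k$, not in $\Gamma_{k+m-1}$ as you wrote; the contradiction is with $c_k=c$ (which of course still equals $c_{k+m-1}$, so the conclusion stands, but the index bookkeeping should be corrected). Second, it is worth noting explicitly that since every $c_k<\mathcal{H}(0)$ and $\mathcal{H}'(0)=0$ by evenness, the origin never belongs to $K_{c_k}$, which is what keeps $K_{c_k}\in\mathcal{A}$ and keeps the deformed sets away from $0$ so that genus monotonicity applies. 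With those touches your argument is a complete and correct proof of the cited theorem.
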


In the light of the variational structure of \eqref{main}, we look for critical points of the associated Euler-Lagrange functional
$\mathcal{H}:E_0\to \mathbb{R}$ defined as
\begin{equation}\label{e3.1}
\mathcal{H}(\varphi) =\widehat{ \mathcal{K}}\left(\Lambda_{p,s}(\varphi)\right)-\int_\Omega F(x,\varphi)dx,
\end{equation}
for all $\varphi\in E_0$, where
$$
\Lambda_{p,s}(\varphi)=\int_{\mathcal{Q}}\frac{1}{p(x,y)}\frac{|\varphi(x)-\varphi(y)|^{p(x,y)}}{|x-y|^{N+s(x,y){p(x,y)}}} dx\,dy,~~\widehat{\mathcal{K}}(t)=\int_0^t\mathcal{K}(s)ds.
$$
Note that $\mathcal{H}$ is a $C^1(E_0, \mathbb{R})$ functional and
\begin{equation} \label{deriv-fun}
\langle \mathcal{H}'(\varphi),\psi\rangle = \mathcal{K}\left(\Lambda_{p,s}(\varphi)\right) \int_{\mathcal{Q}} \frac{|\varphi(x)-\varphi(y)|^{p(x,y)-2}(\varphi(x)-\varphi(y))(\psi(x)-\psi(y))}{|x-y|^{N+p(x,y)s(x,y)}}dxdy -\int_\Omega f(x,\varphi)\psi dx,
\end{equation}
for any $\psi\in E_0$. Thus, critical points of $\mathcal{H}$ are weak solutions of \eqref{main}.
\section{\textbf{Main results and proofs}}
Before stating our first result, we make the following assumptions on $f$:
\begin{description}
   \item[($f_1$)] There exist $c_1>0$ and $1<q(x)<p^*_s(x)$ for all $x\in \Omega$ such that
$$
|f(x,\zeta)| \leq c_1(1+|\zeta|^{q(x)-1}),  \mbox{for all } (x,\zeta) \in \Omega\times \mathbb{R};
 $$
   \item[($f_2$)] there exist $c_2>0$, $\alpha_0\in (1, \theta p^-)$ and an open set $\Omega_0\subset \Omega$ such that
\begin{equation*}
  F(x,\zeta)\geq c_2 |\zeta|^{\alpha_0}~~\text{for all }~~(x,\zeta)\in \Omega_0\times\mathbb{R};
\end{equation*}
   \item[($f_3$)] $f(x,-\zeta)=-f(x,\zeta), \mbox{ for all } (x,\zeta)\in \Omega\times \mathbb{R}$.
 \end{description}
\begin{thm}\label{theo1.1}
Suppose that ($A_1$), ($f_1$)-($f_3$) are satisfied.  If $q^+<\theta p^-$,
then  problem \eqref{main} has infinitely many pairs of weak solutions with negative energy.
\end{thm}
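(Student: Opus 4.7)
The plan is to apply Clarke's theorem (stated above) to the $C^1$ functional $\mathcal{H}$ from \eqref{e3.1} on $E=E_0$, producing arbitrarily many pairs of critical points by constructing test sets of unbounded genus. Evenness of $\mathcal{H}$ is immediate from $(f_3)$, since $f$ being odd in $\zeta$ makes $F$ even and $\Lambda_{p,s}$ is invariant under $\varphi\mapsto -\varphi$. For the lower bound I would prove coercivity directly. Using $(A_1)$, one has $\widehat{\mathcal{K}}(t)\geq \frac{k_1}{\theta}t^\theta$, hence
\[
\widehat{\mathcal{K}}\bigl(\Lambda_{p,s}(\varphi)\bigr)\ \geq\ \frac{k_1}{\theta (p^+)^\theta}\,\rho_{s,p}(\varphi)^\theta\ \geq\ C\|\varphi\|_{E_0}^{\theta p^-}\quad\text{for }\|\varphi\|_{E_0}\geq 1,
\]
by Proposition \ref{lem 3.1}(ii). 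Meanwhile $(f_1)$, combined with Propositions \ref{prop2.1} and \ref{prp 3.3}, bounds $\int_\Omega F(x,\varphi)\,dx$ by $C(1+\|\varphi\|_{E_0}^{q^+})$. Since $q^+<\theta p^-$, this forces $\mathcal{H}(\varphi)\to+\infty$ as $\|\varphi\|_{E_0}\to\infty$, giving both coercivity and a finite lower bound.

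For the $(PS)$ condition, consider a sequence $(\varphi_n)\subset E_0$ with $\mathcal{H}(\varphi_n)\to c$ and $\mathcal{H}'(\varphi_n)\to 0$. Coercivity yields boundedness, so up to subsequence $\varphi_n\rightharpoonup\varphi$ in $E_0$ and $\varphi_n\to\varphi$ in $L^{q(\cdot)}(\Omega)$ by Proposition \ref{prp 3.3}. Assumption $(f_1)$ and H\"older's inequality then give $\int_\Omega f(x,\varphi_n)(\varphi_n-\varphi)\,dx\to 0$, whence
\[
\mathcal{K}\bigl(\Lambda_{p,s}(\varphi_n)\bigr)\,\langle \mathcal{T}(\varphi_n),\varphi_n-\varphi\rangle\to 0.
\]
If $\liminf_n \Lambda_{p,s}(\varphi_n)>0$, then $(A_1)$ keeps $\mathcal{K}(\Lambda_{p,s}(\varphi_n))$ bounded away from zero, so $\langle \mathcal{T}(\varphi_n),\varphi_n-\varphi\rangle\to 0$; together with $\langle \mathcal{T}(\varphi),\varphi_n-\varphi\rangle\to 0$ from weak convergence, the $(S_+)$ property of Proposition \ref{propS} yields $\varphi_n\to\varphi$ in $E_0$. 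In the degenerate case $\Lambda_{p,s}(\varphi_n)\to 0$, one has $\rho_{s,p}(\varphi_n)\to 0$, and Proposition \ref{lem 3.1}(iv) applied with $w_n=\varphi_n$, $w=0$ forces $\|\varphi_n\|_{E_0}\to 0$, so $\varphi_n\to 0=\varphi$ strongly as well.

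It remains to produce, for every $k\in\mathbb{N}$, a symmetric compact set of genus $k$ on which $\mathcal{H}<\mathcal{H}(0)=0$. I would fix a $k$-dimensional subspace $X_k\subset C_c^\infty(\Omega_0)\subset E_0$, on which all norms are equivalent. For $\varphi\in X_k$ with $\operatorname{supp}\varphi\subset\Omega_0$, assumption $(f_2)$ yields
\[
\int_\Omega F(x,\varphi)\,dx \ \geq\ c_2 \|\varphi\|_{L^{\alpha_0}(\Omega_0)}^{\alpha_0}\ \geq\ C_k\|\varphi\|_{E_0}^{\alpha_0}.
\]
For $\|\varphi\|_{E_0}=r<1$, Proposition \ref{lem 3.1}(iii) gives $\rho_{s,p}(\varphi)\leq r^{p^-}$, so $(A_1)$ implies $\widehat{\mathcal{K}}(\Lambda_{p,s}(\varphi))\leq C\,r^{\theta p^-}$. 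Hence $\mathcal{H}(\varphi)\leq C r^{\theta p^-}-c_2C_k r^{\alpha_0}$, which is strictly negative for $r$ small enough since $\alpha_0<\theta p^-$. Taking $Z_k=\{\varphi\in X_k:\|\varphi\|_{E_0}=r\}$ yields a symmetric compact set homeomorphic via an odd map to $S^{k-1}$, hence of genus $k$, with $\sup_{Z_k}\mathcal{H}<0$. Clarke's theorem then produces at least $k$ pairs of critical points with negative energy; since $k$ was arbitrary, infinitely many pairs exist. The step I expect to be most delicate is the degenerate alternative $\Lambda_{p,s}(\varphi_n)\to 0$ in the $(PS)$ verification, which escapes the $(S_+)$ machinery and must be handled directly via modular estimates, together with the bookkeeping between the regimes $\|\cdot\|_{E_0}\leq 1$ and $\geq 1$ forced by Proposition \ref{lem 3.1}.
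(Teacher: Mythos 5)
Your proposal is correct and follows essentially the same route as the paper: coercivity and boundedness below of $\mathcal{H}$ from $(A_1)$, $(f_1)$ and $q^+<\theta p^-$, the $(PS)$ condition via the $(S_+)$ property of Proposition \ref{propS} with the same case split on whether $\Lambda_{p,s}$ along the sequence tends to zero, and Clarke's theorem applied to small spheres in finite-dimensional subspaces of $C_c^\infty(\Omega_0)$ where $(f_2)$ and $\alpha_0<\theta p^-$ force negative energy. The only cosmetic difference is that the paper builds its $j$-dimensional subspaces from functions supported in $j$ disjoint open subsets of $\Omega_0$, whereas you take an arbitrary $k$-dimensional subspace; both yield genus $k$ spheres and the same conclusion.
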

\begin{lem}\label{lemPS}
Suppose that $(A_1)$ and $(f_1)$ are satisfied. Then $\mathcal{H}$ is bounded from below and satisfies the (PS) condition.
\end{lem}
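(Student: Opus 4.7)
\textbf{Plan for the proof of Lemma \ref{lemPS}.}

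The plan is to prove the two statements separately: first coercivity (which gives boundedness from below), then the $(PS)$ condition via the $(S^+)$ property of the operator $\mathcal{T}$ from Proposition \ref{propS}.

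\emph{Boundedness from below.} I would integrate the growth condition $(A_1)$ to obtain $\widehat{\mathcal{K}}(t)\geq \frac{k_1}{\theta}t^{\theta}$, and use the elementary estimate $\Lambda_{p,s}(\varphi)\geq \frac{1}{p^+}\rho_{s,p}(\varphi)$. Combined with Proposition \ref{lem 3.1}(ii), this gives, for $\|\varphi\|_{E_0}>1$,
\[
\widehat{\mathcal{K}}(\Lambda_{p,s}(\varphi))\;\geq\; \frac{k_1}{\theta\,(p^+)^{\theta}}\,\|\varphi\|_{E_0}^{\theta p^-}.
\]
For the nonlinear term, integrating $(f_1)$ yields $|F(x,\zeta)|\leq c_1|\zeta|+\frac{c_1}{q^-}|\zeta|^{q(x)}$. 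Using Proposition \ref{lem23} together with the compact embedding $E_0\hookrightarrow L^{q(x)}(\Omega)$ from Proposition \ref{prp 3.3} (and $E_0\hookrightarrow L^1(\Omega)$), I obtain
\[
\int_{\Omega}F(x,\varphi)\,dx\;\leq\; C_1\|\varphi\|_{E_0}^{q^+}+C_2\|\varphi\|_{E_0}
\]
whenever $\|\varphi\|_{E_0}$ is large. Invoking the hypothesis $q^+<\theta p^-$ from Theorem \ref{theo1.1} (which should be kept standing throughout), the leading term $\|\varphi\|_{E_0}^{\theta p^-}$ dominates, giving $\mathcal{H}(\varphi)\to+\infty$ as $\|\varphi\|_{E_0}\to\infty$. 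Hence $\mathcal{H}$ is coercive and, being continuous, bounded below.

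\emph{The $(PS)$ condition.} Let $(\varphi_n)\subset E_0$ satisfy $\mathcal{H}(\varphi_n)$ bounded and $\mathcal{H}'(\varphi_n)\to 0$ in $E_0^*$. Coercivity gives boundedness of $(\varphi_n)$; reflexivity of $E_0$ (Proposition \ref{lem 3.3}) and the compact embedding of Proposition \ref{prp 3.3} allow me to pass to a subsequence with $\varphi_n\rightharpoonup \varphi$ in $E_0$ and $\varphi_n\to\varphi$ in $L^{q(x)}(\Omega)$. Testing the derivative formula \eqref{deriv-fun} with $\psi=\varphi_n-\varphi$, I get
\[
\mathcal{K}\bigl(\Lambda_{p,s}(\varphi_n)\bigr)\,\langle \mathcal{T}(\varphi_n),\varphi_n-\varphi\rangle \;=\; \langle \mathcal{H}'(\varphi_n),\varphi_n-\varphi\rangle + \int_{\Omega}f(x,\varphi_n)(\varphi_n-\varphi)\,dx.
\]
The first term on the right tends to zero because $\mathcal{H}'(\varphi_n)\to 0$ and $(\varphi_n-\varphi)$ is bounded. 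The second tends to zero by $(f_1)$, Proposition \ref{prop2.1}, and the strong convergence in $L^{q(x)}(\Omega)$ (and in $L^1(\Omega)$).

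\emph{The main obstacle.} The delicate point is that the problem is genuinely degenerate: when $\theta>1$, the Kirchhoff coefficient satisfies $\mathcal{K}(0)=0$, so I cannot simply divide by $\mathcal{K}(\Lambda_{p,s}(\varphi_n))$. I would split into two cases. If $\Lambda_{p,s}(\varphi_n)\to 0$ along a subsequence, then $\rho_{s,p}(\varphi_n)\to 0$, hence $\varphi_n\to 0$ in $E_0$ by Proposition \ref{lem 3.1}(iv); combined with $\varphi_n\rightharpoonup\varphi$, this forces $\varphi=0$ and yields the desired strong convergence. Otherwise, $\Lambda_{p,s}(\varphi_n)\geq c_0>0$ for large $n$, and the lower bound in $(A_1)$ gives $\mathcal{K}(\Lambda_{p,s}(\varphi_n))\geq k_1 c_0^{\theta-1}>0$. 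Dividing, I conclude $\langle \mathcal{T}(\varphi_n),\varphi_n-\varphi\rangle\to 0$; since $\langle \mathcal{T}(\varphi),\varphi_n-\varphi\rangle\to 0$ by weak convergence, we get $\limsup\langle \mathcal{T}(\varphi_n)-\mathcal{T}(\varphi),\varphi_n-\varphi\rangle\leq 0$. The $(S^+)$ property established in Proposition \ref{propS}(ii) then yields $\varphi_n\to\varphi$ in $E_0$, completing the proof.
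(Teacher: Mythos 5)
Your proposal is correct and follows essentially the same route as the paper: coercivity from $(A_1)$, $(f_1)$ and the modular/embedding estimates (both proofs tacitly need the hypothesis $q^+<\theta p^-$ from Theorem \ref{theo1.1}, which you rightly flag as not listed in the lemma itself), then the $(PS)$ condition by testing $\mathcal{H}'(\varphi_n)$ against $\varphi_n-\varphi$ and splitting into the degenerate case $\Lambda_{p,s}(\varphi_n)\to 0$ (forcing $\varphi_n\to 0$ strongly) and the nondegenerate case where $\mathcal{K}(\Lambda_{p,s}(\varphi_n))$ is bounded away from zero and the $(S^+)$ property of Proposition \ref{propS} applies. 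Your version is if anything slightly more careful than the paper's, since you make explicit that $\langle\mathcal{T}(\varphi),\varphi_n-\varphi\rangle\to 0$ by weak convergence before invoking $(S^+)$.
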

\begin{proof}
From $(A_1)$ and $(f_1)$, we have
\begin{eqnarray*}
\mathcal{H}(u)&=& \widehat{\mathcal{K}}(\Lambda_{p,s}(u))-\int_\Omega F(x,u)dx\\
&\geq& \frac{k_1}{\theta}\left(\Lambda_{p,s}(u)\right)^\theta-\frac{c_1}{q^-}\int_\Omega|u|^{q(x)}dx-c_1|\Omega|,
\end{eqnarray*}
for all $u\in E_0$. Hence by Proposition \ref{prp 3.3}, we obtain
\begin{eqnarray*}
\mathcal{H}(u)&\geq& \frac{k_1}{\theta (p^+)^\theta}\min\left\{\|u\|^{\theta p^+}_{E_0},\|u\|^{\theta p^-}_{E_0} \right\}-\frac{c_1}{q^-}\max\left\{\|u\|^{q^+}_{L^{q(x)}(\Omega)},\|u\|^{q^-}_{L^{q(x)}(\Omega)}\right\}-c_1|\Omega| \\
&\geq& \frac{k_1}{\theta (p^+)^\theta}\min\left\{\|u\|^{\theta p^+}_{E_0},\|u\|^{\theta p^-}_{E_0} \right\}-\frac{cc_1}{q^-}\max\left\{\|u\|^{q^+}_{E_0},\|u\|^{q^-}_{E_0}\right\}-c_1|\Omega|.
\end{eqnarray*}
As $q^+<\theta p^-$, $\mathcal{H}$ is bounded from below and coercive.\\
Let $\{v_j\}$ be a $(PS)$ sequence of $\mathcal{H}$ in $E_0$, that is
\begin{eqnarray}\label{er1}
\mathcal{H}(v_j)~~ \text{is bounded in}~~E_0, \quad \mathcal{H}'(v_j) \to 0 \mbox{ in } E_0^*, \quad \text{as} ~~j \to \infty,
\end{eqnarray}
where $E_0^*$ is the dual space of $E_0$.\\
Thus, by \eqref{er1}, we have
\begin{equation*}
  \langle \mathcal{H}'(v_j), v_j-v \rangle\to 0.
\end{equation*}
Hence
\begin{eqnarray*}
  \langle \mathcal{H}'(v_j), v_j-v \rangle &=& \mathcal{K}(\Lambda_{p,s}(v_j))\int_{\mathcal{Q}} \frac{|v_j(x)-v_j(y)|^{p(x,y)-2}(v_j(x)-v_j(y))((v_j(x)-v(x))-(v_j(y)-v(y))}{|x-y|^{N+p(x,y)s(x,y)}}dxdy \\
   &-& \int_\Omega f(x,v)(v_j-v) dx\to 0.
\end{eqnarray*}
From $(f_1)$, Propositions \ref{prop2.1} and \ref{prp 3.3}, we can easily get that
\begin{equation*}
  \int_\Omega f(x,v)(v_j-v) dx \to 0.
\end{equation*}
Therefore, we have
 \begin{equation*}
   \mathcal{K}(\Lambda_{p,s}(v_j))\int_{\mathcal{Q}}\frac{|v_j(x)-v_j(y)|^{p(x,y)-2}(v_j(x)-v_j(y))((v_j(x)-v(x))-(v_j(y)-v(y))}{|x-y|^{N+p(x,y)s(x,y)}}dxdy\to 0.
 \end{equation*}
The coercivity of $\mathcal{H}$ implies that  $\{v_j\}$ is bounded in $E_0$, passing to subsequence, if necessary, we may assume that
\begin{equation*}
  \Lambda_{p,s}(v_j)\rightarrow d_1\geq 0, ~~ \text{as}~j \to +\infty.
\end{equation*}
If $d_1=0$, then $\{v_j\}$ converge strongly to $v=0$ in $E_0$ and the proof is finished.\\
If $d_1>0$, since the function $\mathcal{K}$ is continuous, we have
\begin{equation*}
  \mathcal{K}(\Lambda_{p,s}(v_j))\to \mathcal{K}(d_1)\geq0,~\text{as}~j\to \infty.
\end{equation*}
Then, by $(A_1)$, for $j$ large enough, we obtain
\begin{equation*}
  0<c_3<\mathcal{K}(\Lambda_{p,s}(v_j))<c_4.
\end{equation*}
It follows that
\begin{equation*}
   \int_{\mathcal{Q}}\frac{|v_j(x)-v_j(y)|^{p(x,y)-2}(v_j(x)-v_j(y))((v_j(x)-v(x))-(v_j(y)-v(y))}{|x-y|^{N+p(x,y)s(x,y)}}dxdy\to 0.
 \end{equation*}
 Finally, Proposition \ref{propS} ensures that  $u_n\to u$  in $E_0$.
\end{proof}
 \subsection{Proof of Theorem \ref{theo1.1}}
 We consider
 \begin{equation*}
   \mathcal{A}_j=\left\{Z\subset\mathcal{A}: ~\gamma(Z)\geq j\right\},
 \end{equation*}
 \begin{equation*}
   d_j=\inf\limits_{Z\in \mathcal{A}_j }\sup\limits_{u\in Z}\mathcal{H}(u),~~~~ j=1,2,\ldots.
 \end{equation*}
 Thus, we have
 \begin{equation*}
   -\infty< d_1\leq d_2\leq\ldots\leq d_j\leq d_{j+1}\leq\ldots.
 \end{equation*}
 Now we prove that $d_j<0$ for every $j\in \mathbb{N}$. For each $j$, we take $j$ disjoint open sets $D_i$ such that $\bigcup_{i=1}^{j}D_i\subset \Omega_0$. For $i=1,2,\ldots,j$, let $u_i\in \left(E_0\cap C^{\infty}_{0}(D_i)\right)\backslash\{0\}$ and
 \begin{equation*}
   Z_j=span\left\{u_1,u_2,\ldots u_j\right\},~~ ~~ S^j_{r_j}=\left\{u\in Z_j:~~\|u\|_{E_0}=r_j \right\},
 \end{equation*}
 where $r_j\in(0,1)$. For each $u\in  Z_j$, there exist $\nu_i\in \mathbb{R}$, $i=1,2,\ldots$ such that
 \begin{equation}\label{et1}
   u(x)=\sum_{i=1}^{j}\nu_iu_i(x)~~~~\text{for}~x\in \Omega.
 \end{equation}
 So
 \begin{equation}\label{et2}
   \|u\|_{L^{\alpha_{0}}(\Omega)}=\left(\int_\Omega |u(x)|^{\alpha_0}\right)^{\frac{1}{\alpha_0}}=\left(\sum_{i=1}^{j}|\nu_i|^{\alpha_0}\int_{D_i} |u_i(x)|^{\alpha_{0}}\right)^{\frac{1}{\alpha_{0}}}.
 \end{equation}
 As all norms of a finite dimensional normed space are equivalent, there is a constant $C>0$ such that
 \begin{equation}\label{et3}
   \|u\|_{E_0}\leq C \|u\|_{L^{\alpha_{0}}(\Omega)}~~\text{for all }~~ u\in Z_j.
 \end{equation}
 By \eqref{et1}, \eqref{et2} and \eqref{et3}, we obtain
 \begin{align*}
   \mathcal{H}(tu) &= \widehat{\mathcal{K}}\left(\Lambda_{p,s}(tu)\right)-\int_\Omega F(x,tu)dx \\
    & \leq \frac{k_2}{\theta}\left(\Lambda_{p,s}(tu)\right)^{\theta}-\sum_{i=1}^{j}F(x,t\nu_iu_i(x))dx\\
    & \leq \frac{k_2}{\theta (p^-)^{\theta}}t^{\theta p^-}\|u\|_{E_0}^{\theta p^-}-a_2t^{\alpha_0}\sum_{i=1}^{j} |\nu_i|^{\alpha_0}\int_{D_i}|u_i(x)|^{\alpha_0}dx\\
    & \leq \frac{k_2r_j^{\theta p^-}}{\theta (p^-)^{\theta}}t^{\theta p^-}-a_2t^{\alpha_0}\|u\|_{L^{\alpha_{0}}(\Omega)}^{\alpha_0}\\
    & \leq \frac{k_2r_j^{\theta p^-}}{\theta (p^-)^{\theta}}t^{\theta p^-}-\frac{a_2r_j^{\alpha_0}}{C^{\alpha_0}}t^{\alpha_0},
 \end{align*}
 for all $u\in S_{r_j}^j$ and sufficient small $t>0$. Since $\alpha_0<\theta p^-$, we can find $t_j\in (0,1)$ and $\epsilon_j>0$ such that
 \begin{equation*}
   \mathcal{H}(t_ju)\leq-\epsilon_j<0~~\text{for all} ~~u\in S_{r_j}^j,
 \end{equation*}
 i.e.,
 \begin{equation*}
   \mathcal{H}(u)\leq-\epsilon_j<0~~\text{for all} ~~u\in S_{t_jr_j}^j.
 \end{equation*}
 It is clear that $\gamma(S_{t_jr_j}^j)=j$ and therefore $d_j\leq -\epsilon_j<0$. Finally, by Lemma \ref{lemPS} and the
results presented above, we can apply Theorem \ref{Clarke's theorem} to show that the functional $\mathcal{H}$ admits at least $j$ pairs of distinct
critical points. Moreover, since $j$ is arbitrary, we obtain infinitely many critical points of $\mathcal{H}$.\\
The proof is complete.\\

Next we will  consider problem \eqref{main} in the case:
\begin{equation*}
  f(x,u)=\mu\mathcal{\omega}_1(x)|u|^{\alpha(x)-2}u-\nu\mathcal{\omega}_2(x)|u|^{\beta(x)-2}u,
\end{equation*}
where $\mu, \nu$ are two real parameters, $\alpha,\beta\in C_+(\Omega)$ and $\mathcal{\omega}_1, \mathcal{\omega}_2$ are functions in some generalized Sobolev spaces, precisely, we assume the following hypothesis.
\begin{enumerate}
  \item[$(A_2)$] $1<\alpha(x)<\beta(x)<p^-\leq p^+<\frac{N}{s}<\frac{\theta N}{s}<\min\{m_1(x),m_2(x)\}$ for all $ x\in \overline{\Omega}$,
  where $m_1,m_2\in C(\overline{\Omega})$, $\mathcal{\omega}_1\in L^{\frac{m_1(x)}{\theta}}(\Omega)$ such that $\mathcal{\omega}_1(x)>0$ in $\Omega_0\subset\subset\Omega$ with $|\Omega_0|>0$ and $\mathcal{\omega}_2\in L^{\frac{m_2(x)}{\theta}}(\Omega)$ such that $\mathcal{\omega}_2(x)\geq 0$ in $\Omega$.
   \end{enumerate}
\begin{thm}\label{thm2}
If $(A_1)$, $(A_2)$ are fulfilled,
then for any $\mu>0$ and $\nu>0$,  problem \eqref{main} admits at least one nontrivial solution.
\end{thm}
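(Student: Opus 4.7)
The plan is to apply the direct method of the calculus of variations to the Euler--Lagrange functional
\begin{equation*}
\mathcal{H}(u) = \widehat{\mathcal{K}}\!\left(\Lambda_{p,s}(u)\right) - \mu \int_\Omega \frac{\omega_1(x)}{\alpha(x)}|u|^{\alpha(x)}\, dx + \nu \int_\Omega \frac{\omega_2(x)}{\beta(x)}|u|^{\beta(x)}\, dx
\end{equation*}
and to exhibit a global minimizer with strictly negative energy. I would first verify that $\mathcal{H}$ is well defined and of class $C^1$ on $E_0$. Writing $\tfrac{\theta}{m_i(x)} + \tfrac{m_i(x)-\theta}{m_i(x)} = 1$ and combining the generalized H\"older inequality (Proposition \ref{prop2.1}) with Proposition \ref{lem23}, each weighted integral is controlled by a variable-exponent Lebesgue norm of $u$ of the form $|u|_{L^{\alpha(x) m_1(x)/(m_1(x)-\theta)}}$, and its $\beta, m_2$ analogue. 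Assumption $(A_2)$ ensures these exponents stay strictly below $p_s^*(x)$, so Proposition \ref{prp 3.3} provides the (compact) embedding of $E_0$ needed to make every term finite.

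Coercivity then follows from $(A_1)$ and Proposition \ref{lem 3.1}(ii): for $\|u\|_{E_0} > 1$ one has $\widehat{\mathcal{K}}(\Lambda_{p,s}(u)) \geq \tfrac{k_1}{\theta(p^+)^\theta} \|u\|_{E_0}^{\theta p^-}$, the $\omega_1$-integral is bounded above by $C\|u\|_{E_0}^{\alpha^+}$, and the $\omega_2$-integral is nonnegative; since $\alpha^+ < p^- \leq \theta p^-$, $\mathcal{H}(u) \to +\infty$. For weak lower semicontinuity note that the modular $\Lambda_{p,s}$ is convex and lower semicontinuous (Fatou applied along an a.e.-convergent subsequence extracted via the compact embedding), hence weakly lower semicontinuous; since $\widehat{\mathcal{K}}$ is continuous and nondecreasing, $\widehat{\mathcal{K}} \circ \Lambda_{p,s}$ inherits this property. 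The two weighted integral terms are sequentially weakly continuous on $E_0$ by combining the compact embeddings with the continuity of the Nemytskii maps $u \mapsto |u|^{\alpha(x)}, |u|^{\beta(x)}$ between the relevant Lebesgue spaces. Reflexivity of $E_0$ (Proposition \ref{lem 3.3}) then yields a minimizer $u_* \in E_0$, which is automatically a critical point of the $C^1$-functional $\mathcal{H}$ and hence a weak solution of \eqref{main}.

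To upgrade $u_*$ to a nontrivial solution I would verify $\inf \mathcal{H} < 0$ by constructing a test function with negative energy. Fix $x_0 \in \Omega_0$ with $\omega_1(x_0) > 0$; by continuity of $\alpha$ and $\beta$, there is a ball $B_\rho(x_0) \Subset \Omega_0$ small enough that $\alpha_\rho^+ := \sup_{B_\rho}\alpha < \inf_{B_\rho}\beta =: \beta_\rho^-$. Choose $v_0 \in C_0^\infty(B_\rho(x_0))$ with $0 \leq v_0 \leq 1$ and $v_0 \not\equiv 0$. For $t \in (0,1)$ small one obtains
\begin{equation*}
\mathcal{H}(tv_0) \leq C_3\, t^{\theta p^-} - \mu C_1\, t^{\alpha_\rho^+} + \nu C_2\, t^{\beta_\rho^-},
\end{equation*}
with $C_1 > 0$ (since $\omega_1 > 0$ on $B_\rho$) and $C_2, C_3 \geq 0$. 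Because $\alpha_\rho^+ < \min\{\beta_\rho^-, \theta p^-\}$, the $-\mu C_1 t^{\alpha_\rho^+}$ contribution dominates as $t \to 0^+$, so $\mathcal{H}(tv_0) < 0$ for $t$ sufficiently small, hence $u_* \neq 0$.

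The principal technical obstacle is the H\"older chain underpinning the first step: one must verify that $\alpha(x) m_1(x)/(m_1(x)-\theta)$ and its $\beta, m_2$ counterpart remain strictly subcritical with respect to $p_s^*(x)$, a delicate inequality that assumption $(A_2)$ is tailored to secure. A secondary but genuine subtlety is the localization argument in the last step: the pointwise inequality $\alpha(x) < \beta(x)$ does not by itself imply $\alpha^+ < \beta^-$ globally, so the test function must be supported on a small enough ball where continuity of $\alpha, \beta$ upgrades the strict pointwise inequality to a uniform one; without this, the competition between $t^{\alpha(x)}$ and $t^{\beta(x)}$ as $t \to 0^+$ cannot be resolved in favour of the negative term.
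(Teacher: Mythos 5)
Your proposal is correct and follows essentially the same route as the paper: coercivity of $\mathcal{H}$ plus weak lower semicontinuity and the minimum principle to produce a global minimizer (the paper's Lemma \ref{coercive}), followed by a localized test function with $\mathcal{H}(tu_*)<0$ for small $t$ to rule out the trivial solution (the paper's Lemma \ref{lemm4.2}, which performs exactly the localization you flag as the key subtlety, shrinking to a set $\Omega_1$ where $\alpha(x)\leq\alpha_0^-+\epsilon_0<\beta_0^-$). Your added detail on weak lower semicontinuity of $\widehat{\mathcal{K}}\circ\Lambda_{p,s}$ is a point the paper merely asserts, but it does not change the argument.
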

For the proof of Theorem \ref{thm2}, we will use the minimum principle.\\
Since $\mathcal{H}$ is weakly lower semi-continuous, it suffices to show that $\mathcal{H}$ is coercive.
\begin{lem}\label{coercive}
Let $(A_1)$ and $(A_2)$ hold. Then for any $\mu>0$ and $\nu>0$ the functional $\mathcal{H}$ is coercive on $E_0$.
\end{lem}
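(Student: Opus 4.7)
The plan is to establish a lower bound of the form $\mathcal{H}(u) \geq A\|u\|_{E_0}^{\theta p^-} - B\|u\|_{E_0}^{\alpha^+} - C$ valid for $\|u\|_{E_0}$ large, and then conclude by the ordering $\alpha^+ < p^- < \theta p^-$ (using $\alpha(x) < p^-$ from $(A_2)$ and $\theta > 1$ from $(A_1)$). The structure mirrors the bounded-below argument in Lemma~\ref{lemPS}, but with the subcritical exponent $\alpha(x)$ replacing a generic $q(x)$, and with the role of the second term being exploited as a sign-definite gain rather than a perturbation.

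The first step is to bound the Kirchhoff term from below. Integrating the left inequality of $(A_1)$ gives $\widehat{\mathcal{K}}(t) \geq (k_1/\theta)\, t^{\theta}$. Combined with $\Lambda_{p,s}(u) \geq \rho_{s,p}(u)/p^+$ and Proposition~\ref{lem 3.1}(ii), one obtains, for $\|u\|_{E_0} > 1$,
\begin{equation*}
\widehat{\mathcal{K}}(\Lambda_{p,s}(u)) \geq \frac{k_1}{\theta (p^+)^{\theta}}\, \|u\|_{E_0}^{\theta p^-}.
\end{equation*}
For the potential term, write $F(x,u)=\mu\,\omega_1(x)\,|u|^{\alpha(x)}/\alpha(x) - \nu\,\omega_2(x)\,|u|^{\beta(x)}/\beta(x)$. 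Since $\nu>0$ and $\omega_2 \geq 0$ on $\Omega$, the $\beta$-term contributes a non-negative piece to $-\int_\Omega F(x,u)\,dx$ and may be discarded, giving
\begin{equation*}
-\int_\Omega F(x,u)\,dx \geq -\frac{\mu}{\alpha^-}\int_\Omega \omega_1(x)\,|u|^{\alpha(x)}\,dx.
\end{equation*}

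The next step is to absorb this remainder into a subdominant power of $\|u\|_{E_0}$. By Proposition~\ref{prop2.1} applied with the conjugate pair $m_1(x)/\theta$ and $m_1(x)/(m_1(x)-\theta)$, and by Proposition~\ref{lem23},
\begin{equation*}
\int_\Omega \omega_1(x)\,|u|^{\alpha(x)}\,dx \leq C\, |\omega_1|_{m_1(\cdot)/\theta}\; \max\!\left\{ |u|_{\alpha(\cdot) m_1(\cdot)/(m_1(\cdot)-\theta)}^{\alpha^+},\; |u|_{\alpha(\cdot) m_1(\cdot)/(m_1(\cdot)-\theta)}^{\alpha^-}\right\}.
\end{equation*}
To invoke the continuous embedding from Proposition~\ref{prp 3.3}, one must check that the exponent $r(x):=\alpha(x) m_1(x)/(m_1(x)-\theta)$ stays strictly below the critical $p_s^*(x)$. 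This is where $(A_2)$ is used quantitatively: $m_1(x) > \theta N/s$ implies $m_1(x)/(m_1(x)-\theta) < N/(N-s)$, hence $r(x) < \alpha(x)\, N/(N-s)$, and the remaining slack to $p_s^*(x)$ is guaranteed by $\alpha(x) < p^- \leq \overline{p}(x)$.

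Combining these ingredients, for $\|u\|_{E_0} > 1$ large one gets
\begin{equation*}
\mathcal{H}(u) \geq \frac{k_1}{\theta (p^+)^{\theta}}\, \|u\|_{E_0}^{\theta p^-} \;-\; C_1\,\mu\, \|u\|_{E_0}^{\alpha^+}.
\end{equation*}
Since $\alpha^+ < p^- \leq \theta p^-$, the right-hand side diverges to $+\infty$ as $\|u\|_{E_0}\to+\infty$, yielding coercivity. The main technical obstacle is the verification that $r(x) < p_s^*(x)$ everywhere on $\overline{\Omega}$; everything else is a bookkeeping exercise with the modular inequalities in Propositions~\ref{lem23} and~\ref{lem 3.1}. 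A minor subtlety is that Proposition~\ref{lem 3.1} provides different estimates for $\|u\|_{E_0}<1$ and $>1$, but coercivity only concerns large norms, so using the $>1$ branch throughout is harmless.
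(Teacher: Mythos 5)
Your proposal is correct and follows essentially the same route as the paper: drop the nonnegative $\nu\,\omega_2$ term, bound $\widehat{\mathcal K}$ below by $\frac{k_1}{\theta(p^+)^\theta}\|u\|_{E_0}^{\theta p^-}$ for large norm, control $\int_\Omega \omega_1|u|^{\alpha(x)}dx$ via H\"older with exponents $m_1(\cdot)/\theta$ and $m_1(\cdot)/(m_1(\cdot)-\theta)$ together with Propositions~\ref{lem23} and~\ref{prp 3.3}, and conclude from $\alpha^+<p^-<\theta p^-$. Your explicit check that $\alpha(x)m_1(x)/(m_1(x)-\theta)<p_s^*(x)$ is a welcome detail the paper leaves implicit, but it does not change the argument.
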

\begin{proof}
By conditions $(A_1)$, $(A_2)$ and H\"{o}lder's inequality, we get that
\begin{align*}
  \mathcal{H}(u) &= \widehat{\mathcal{K}}\left(\Lambda _{p,s}(u)\right)-\mu\int_\Omega\frac{\mathcal{\omega}_1(x)}{\alpha(x)}|u|^{\alpha(x)}dx+\nu \int_\Omega\frac{\mathcal{\omega}_2(x)}{\beta(x)}|u|^{\beta(x)}dx\\
  & \geq \frac{k_1}{\theta \left(p^+\right)^{\theta}}\min\left\{\|u\|_{E_0}^{\theta p^+},\|u\|_{E_0}^{\theta p^-}\right\}-\mu\int_\Omega\frac{\mathcal{\omega}_1(x)}{\alpha(x)}|u|^{\alpha(x)}dx\\
   & \geq \frac{k_1}{\theta \left(p^+\right)^{\theta}}\min\left\{\|u\|_{E_0}^{\theta p^+},\|u\|_{E_0}^{\theta p^-}\right\}-\frac{\mu}{\alpha^-}\|\mathcal{\omega}_1\|_{L^{\frac{m_1(x)}{\theta}}(\Omega)}\left| |u|^{\alpha(x)}\right|_{L^{\frac{m_1(x)}{m_1(x)-\theta}}(\Omega)}\\
   & \geq \frac{k_1}{\theta \left(p^+\right)^{\theta}}\min\left\{\|u\|_{E_0}^{\theta p^+},\|u\|_{E_0}^{\theta p^-}\right\}-\frac{\mu}{\alpha^-}
   \|\mathcal{\omega}_1\|_{L^{\frac{m_1(x)}{\theta}}(\Omega)}\max\left\{ \|u\|^{\alpha^-}_{L^{\frac{m_1(x)\alpha(x)}{m_1(x)-\theta}}(\Omega)}, \|u\|^{\alpha^+}_{L^{\frac{m_1(x)\alpha(x)}{m_1(x)-\theta}}(\Omega)} \right\}\\
    & \geq \frac{k_1}{\theta \left(p^+\right)^{\theta}}\min\left\{\|u\|_{E_0}^{\theta p^+},\|u\|_{E_0}^{\theta p^-}\right\}
    -\frac{\mu}{\alpha^-}   \|\mathcal{\omega}_1\|_{L^{\frac{m_1(x)}{\theta}}(\Omega)}
    \max\left\{c^{\alpha^-} \|u\|^{\alpha^-}_{E_0}, c^{\alpha^+}\|u\|^{\alpha^+}_{E_0} \right\}.
\end{align*}
Since $\alpha^+<p^-<\theta p^-$, we infer that $\mathcal{H}(u)\to +\infty$ as $\|u\|_{E_0}\to +\infty$, that is $\mathcal{H}$ is coercive on $E_0$.
\end{proof}
From Lemma \ref{coercive} and the minimum principle, for any $\mu > 0$ and $\nu > 0$, the functional
$\mathcal{H}$ has a critical point and problem \eqref{main} has a weak solution. The following lemma shows
that it is not trivial.
\begin{lem}\label{lemm4.2}
Suppose that $(A_2)$ holds. Then for any $\mu>0$ and
$\nu>0$, there exists $u_*\in E_0$ such that $u_*\geq0$, $u_*\neq0$ and $\mathcal{H}(tu_*)<0$ for all $t > 0$ small
enough.
\end{lem}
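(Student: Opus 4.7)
The plan is to exploit the pointwise orderings $\alpha(x)<\beta(x)<p^-$ and $\theta>1$: plugging $tu_*$ into $\mathcal{H}$, the $\alpha$-term of $F$ decays like $t^{\alpha(\cdot)}$, the $\beta$-term like $t^{\beta(\cdot)}$, and the Kirchhoff term like $t^{\theta p^-}$, so the negative $\alpha$-term should dominate for small $t>0$. The delicate point is that $\alpha(x)<\beta(x)$ is only a pointwise inequality, so globally one may have $\alpha^+\ge\beta^-$; I will work around this by localizing $u_*$ to a tiny ball, where the local maximum of $\alpha$ and local minimum of $\beta$ are separated by continuity.

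To choose $u_*$, I pick a point $x_0\in\Omega_0$ and, using $\alpha,\beta\in C(\overline\Omega)$ together with $\alpha(x_0)<\beta(x_0)$, select constants $a<b$ and a ball $B=B(x_0,r)\subset\Omega_0$ such that
\[\alpha(x)\le a<b\le\beta(x)\qquad\text{for all }x\in\overline B.\]
I then take any nontrivial $u_*\in C_0^\infty(B)\cap E_0$ with $0\le u_*\le 1$; such functions exist by the same density reasoning used at the end of the proof of Theorem~\ref{theo1.1}. The assumption $\omega_1>0$ on $\Omega_0\supset B$ guarantees $\int_B\omega_1\, u_*^{\alpha(x)}\,dx>0$.

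For $0<t<1$ I combine three routine estimates. Using $t^{p(x,y)}\le t^{p^-}$ together with $(A_1)$ yields $\widehat{\mathcal{K}}(\Lambda_{p,s}(tu_*))\le C_1\, t^{\theta p^-}$. On $\mathrm{supp}\,u_*\subset B$, the bound $t^{\alpha(x)}\ge t^{a}$ gives $\mu\int_\Omega\tfrac{\omega_1(x)}{\alpha(x)}|tu_*|^{\alpha(x)}\,dx\ge\mu C_2\, t^{a}$ with $C_2>0$, while $t^{\beta(x)}\le t^{b}$ gives $\nu\int_\Omega\tfrac{\omega_2(x)}{\beta(x)}|tu_*|^{\beta(x)}\,dx\le\nu C_3\, t^{b}$ with $C_3\ge 0$. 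Substituting,
\[\mathcal{H}(tu_*)\ \le\ t^{a}\bigl[C_1\, t^{\theta p^- - a}-\mu C_2+\nu C_3\, t^{\,b-a}\bigr].\]
Since $a\le\alpha^+<p^-\le\theta p^-$ and $b>a$, both exponents $\theta p^--a$ and $b-a$ are strictly positive, so the bracket converges to $-\mu C_2<0$ as $t\to 0^+$. Hence $\mathcal{H}(tu_*)<0$ for all sufficiently small $t>0$. The main obstacle, and really the only nontrivial step, is the initial localization producing the uniform gap $a<b$ on $\overline B$; after that the argument is just $(A_1)$ plus the modular-norm comparison.
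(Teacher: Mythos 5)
Your proof is correct and follows essentially the same route as the paper: localize to a small neighbourhood where, by continuity, the supremum of $\alpha$ is strictly below the infimum of $\beta$, take a nonnegative bump function supported there, and compare the powers $t^{\theta p^-}$, $t^{a}$, $t^{b}$ as $t\to 0^+$ (the paper localizes around a minimizer of $\alpha$ on $\overline{\Omega_0}$ and uses $\alpha_0^-+\epsilon_0<\beta_0^-$, which is the same device). One cosmetic remark: your justification ``$a\le\alpha^+<p^-$'' for $\theta p^--a>0$ is unnecessary (and $a\le\alpha^+$ need not hold); the correct and immediate reason is $a<b\le\inf_{\overline B}\beta<p^-\le\theta p^-$.
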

\begin{proof}
 In the sequel, we use the notation $\alpha_{0}^-:=\inf\limits_{x\in\overline{\Omega_0}}\alpha(x)$, $\beta_{0}^-:=\inf\limits_{x\in\overline{\Omega_0}}\beta (x)$ and $p_0^-:=\inf\limits_{x,y\in\overline{\Omega_0}\times \overline{\Omega_0}}p(x,y)$. Since $\alpha_{0}^-<\beta_{0}^-$, let $\epsilon_0>0$ be such that $\alpha_{0}^-+\epsilon_0<\beta_{0}^-$.\\
 Since $\alpha\in C(\overline{\Omega_0})$, there exists $\Omega_1\subset\subset \Omega_0 $ a neighborhood of $x$
 such that $|\alpha(x)-\alpha_{0}^-|<\epsilon_0$ for all $x\in \Omega_1$. So,  $\alpha(x)\leq \alpha_{0}^-+\epsilon_0<\beta_{0}^-$ for all $x\in \Omega_1$.\\
 Take $u_* \in C^\infty_0(\Omega)$ be such that $\overline{\Omega_1}\subset supp(u_*)$, $u_*=1$ for $x\in \overline{\Omega_1}$ and  $0\leq u_*\leq 1$ in $\Omega_0$.\\
Thus, for any $t \in(0, 1)$ we have
\begin{align*}
  \mathcal{H}(tu_*) &= \widehat{\mathcal{K}}\left(\Lambda _{p,s}(tu_*)\right)-\mu\int_\Omega\frac{t^{\alpha(x)}\mathcal{\omega}_1(x)}{\alpha(x)}|u_*|^{\alpha(x)}dx+\nu \int_\Omega\frac{t^{\beta(x)}\mathcal{\omega}_2(x)}{\beta(x)}|u_*|^{\beta(x)}dx\\
  &\leq \frac{k_2t^{\theta p_0^-}}{\theta \left(p_0^-\right)^{\theta}}\left(\rho_{s,p}(u_*)\right)^\theta
  -\frac{\mu t^{\alpha_{0}^-+\epsilon_0}}{\alpha_{0}^-+\epsilon_0} \int_{\Omega_1}\mathcal{\omega}_1(x)| u_*|^{\alpha(x)}\,dx
   +\frac{\nu t^{\beta_{0}^-}}{\beta_{0}^-} \int_{\Omega_0}\mathcal{\omega}_2(x)| u_*|^{\beta(x)}\,dx\\
   &\leq t^{\beta_0^-} \frac{k_2}{\theta \left(p_0^-\right)^{\theta}}\left(\rho_{s,p}(u_*)\right)^\theta
  -\frac{\mu t^{\alpha_{0}^-+\epsilon_0}}{\alpha_{0}^-+\epsilon_0} \int_{\Omega_1}\mathcal{\omega}_1(x)| u_*|^{\alpha(x)}\,dx
   +\frac{\nu t^{\beta_{0}^-}}{\beta_{0}^-} \int_{\Omega_0}\mathcal{\omega}_2(x)| u_*|^{\beta(x)}\,dx.
   \end{align*}
   It follows that $\mathcal{H}(tu_*)<0$ for all $0 < t<\delta^{\frac{1}{\beta_{0}^--\alpha_{0}^--\epsilon_0}}$ with $0 <\delta<\min\{1,\delta_0\}$ and
   $$
   \delta_0:=\frac{\frac{\mu }{\alpha_{0}^-+\epsilon_0} \int_{\Omega_1}\mathcal{\omega}_1(x)| u_*|^{\alpha(x)}\,dx}{
   \frac{k_2}{\theta \left(p_0^-\right)^{\theta}}\left(\rho_{s,p}(u_*)\right)^\theta+\frac{\nu }{\beta_{0}^-} \int_{\Omega_0}\mathcal{\omega}_2(x)| u_*|^{\beta(x)}\,dx}.
   $$
   Finally, we point out that
   $$
   \frac{k_2}{\theta \left(p_0^-\right)^{\theta}}\left(\rho_{s,p}(u_*)\right)^\theta+\frac{\nu }{\beta_{0}^-} \int_{\Omega_0}\mathcal{\omega}_2(x)| u_*|^{\beta(x)}\,dx>0.
   $$
   In fact, if it is not true then
   $$
   \rho_{s,p}(u_*)=0,
   $$
   which gives $\|u_*\|_{E_0} = 0$, hence $u_* = 0$ in $\Omega_0$. This is a contradiction.
\end{proof}
The proof of Theorem \ref{thm3} is now complete.
\begin{thm}\label{thm3}
If $(A_1)$, $(A_2)$ are fulfilled, then there exists $\mu^* > 0$
such that for all $\mu\in (0,\mu^*  )$ and all $\nu > 0$, problem \eqref{main} has at least one non-negative weak
solution.
\end{thm}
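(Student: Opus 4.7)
\textbf{Proof plan for Theorem \ref{thm3}.}

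My plan is to replace the nonlinearity $f$ by the truncation
\[
\widetilde f(x,u):=\mu\omega_1(x)u_+^{\alpha(x)-1}-\nu\omega_2(x)u_+^{\beta(x)-1},\qquad u_\pm:=\max\{\pm u,0\},
\]
and work with the associated $C^1$ functional
\[
\widetilde{\mathcal H}(u)=\widehat{\mathcal K}(\Lambda_{p,s}(u))-\mu\int_\Omega\frac{\omega_1(x)}{\alpha(x)}u_+^{\alpha(x)}dx+\nu\int_\Omega\frac{\omega_2(x)}{\beta(x)}u_+^{\beta(x)}dx.
\]
Because $|u_+|\le|u|$, the chain of inequalities used in Lemma~\ref{coercive} carries over verbatim, so $\widetilde{\mathcal H}$ is coercive and weakly lower semi-continuous on $E_0$. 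The idea is then to find a nontrivial minimiser of $\widetilde{\mathcal H}$ in a small ball (this is where the smallness of $\mu$ enters) and to prove a posteriori that it is non-negative, so that it is also a solution of \eqref{main}.

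Next I would build the smallness threshold $\mu^*$ by a ball-geometry argument. On the sphere $\|u\|_{E_0}=r\in(0,1)$, using $(A_1)$, $(A_2)$, Propositions~\ref{prop2.1}, \ref{lem23} and the embedding from Proposition~\ref{prp 3.3}, one obtains
\[
\widetilde{\mathcal H}(u)\ \ge\ A\,r^{\theta p^+}-\mu B\,r^{\alpha^-},
\]
with $A,B>0$ independent of $u$ and $\mu$. Since $\alpha^+<p^-\le\theta p^+$, fixing some $r_0\in(0,1)$ and setting $\mu^*:=\frac{A}{2B}r_0^{\theta p^+-\alpha^-}>0$ forces, for every $\mu\in(0,\mu^*)$, $\inf_{\|u\|_{E_0}=r_0}\widetilde{\mathcal H}\ge\tfrac{A}{2}r_0^{\theta p^+}>0$. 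On the other hand, Lemma~\ref{lemm4.2}, whose proof only involves the non-negative test function $u_*$, applies unchanged to $\widetilde{\mathcal H}$ and yields $u_*\ge0$, $u_*\not\equiv0$, and $t>0$ small enough that $tu_*\in B_{r_0}$ and $\widetilde{\mathcal H}(tu_*)<0$. Thus $\inf_{\overline{B_{r_0}}}\widetilde{\mathcal H}<0<\inf_{\partial B_{r_0}}\widetilde{\mathcal H}$; weak lower semi-continuity, coercivity and the reflexivity of $E_0$ (Proposition~\ref{lem 3.3}) provide a minimiser $u^*\in\overline{B_{r_0}}$, which by the strict boundary inequality lies in the open ball $B_{r_0}$, is therefore an interior local minimiser, hence a critical point of $\widetilde{\mathcal H}$, and is nontrivial since $\widetilde{\mathcal H}(u^*)<0$.

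Finally I would prove $u^*\ge0$ a.e.\ by testing $\widetilde{\mathcal H}'(u^*)=0$ against $-(u^*)_-\in E_0$. The term involving $\widetilde f$ vanishes because $(u^*_+)^{\alpha(x)-1}=(u^*_+)^{\beta(x)-1}=0$ on the support of $(u^*)_-$, leaving
\[
\mathcal K(\Lambda_{p,s}(u^*))\int_{\mathcal Q}\frac{|u^*(x)-u^*(y)|^{p(x,y)-2}(u^*(x)-u^*(y))\bigl((u^*)_-(y)-(u^*)_-(x)\bigr)}{|x-y|^{N+s(x,y)p(x,y)}}dx\,dy=0.
\]
A short case analysis on the signs of $u^*(x),u^*(y)$ shows that the integrand is pointwise non-negative and strictly positive wherever $u^*$ changes sign; combined with $\mathcal K(\Lambda_{p,s}(u^*))>0$, which holds by $(A_1)$ because $u^*\not\equiv0$ forces $\Lambda_{p,s}(u^*)>0$, this forces $(u^*)_-\equiv0$, i.e.\ $u^*\ge0$. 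Since $\widetilde f(\cdot,u^*)=f(\cdot,u^*)$ on $\{u^*\ge0\}$, $u^*$ is the desired non-negative weak solution of \eqref{main}. The main obstacle is precisely this last step: the non-locality of $(-\Delta)^{s(\cdot)}_{p(\cdot)}$ rules out any pointwise maximum principle, so the sign of $u^*$ must be extracted entirely from the double integral, and the variable exponents $p(x,y),s(x,y)$ demand that the sign analysis be carried out uniformly on $\mathcal Q$; everything else in the plan is a routine adaptation of Lemmas~\ref{coercive} and \ref{lemm4.2}.
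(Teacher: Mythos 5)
Your proposal is correct, but it follows a genuinely different route from the paper. The paper works with the untruncated functional $\mathcal{H}$: after the sphere estimate of Lemma \ref{lemm4.1} (which produces $\mu^*$ exactly as your $A/(2B)$ threshold does) and the interior negativity supplied by Lemma \ref{lemm4.2}, it applies Ekeland's Variational Principle on $\overline{B_{\tau}(0)}$ to obtain a Palais--Smale-type sequence at the negative level $\overline{c}_{\mu}$, and then passes to a strong limit via the $(S^+)$ property of Proposition \ref{propS}. You instead truncate $f$ to $\widetilde f$, minimize $\widetilde{\mathcal H}$ directly over the weakly sequentially compact ball $\overline{B_{r_0}}$ --- so neither Ekeland nor the $(S^+)$ machinery is needed, since weak lower semicontinuity together with $\inf_{\overline{B_{r_0}}}\widetilde{\mathcal H}<0<\inf_{\partial B_{r_0}}\widetilde{\mathcal H}$ already yields an interior critical point --- and then recover the sign by testing $\widetilde{\mathcal H}'(u^*)=0$ against $-(u^*)_-$. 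Your route is not only more elementary but proves strictly more: the paper's argument stops at ``nontrivial weak solution with negative energy'' and never establishes the non-negativity asserted in the statement of Theorem \ref{thm3}, whereas your truncation-plus-test-function step supplies exactly that missing piece. Two small points to make explicit: $(u^*)_-\in E_0$ because $t\mapsto\max\{-t,0\}$ is $1$-Lipschitz, and the elementary inequality $(u(x)-u(y))\bigl(u_-(x)-u_-(y)\bigr)\le -\bigl(u_-(x)-u_-(y)\bigr)^2$ reduces your ``case analysis'' to one line, giving
\begin{equation*}
\int_{\mathcal Q}\frac{|u^*(x)-u^*(y)|^{p(x,y)-2}\bigl((u^*)_-(x)-(u^*)_-(y)\bigr)^2}{|x-y|^{N+s(x,y)p(x,y)}}\,dx\,dy=0,
\end{equation*}
whence $(u^*)_-$ is a.e.\ constant on $\mathbb{R}^N$ and therefore identically zero since it vanishes on $\mathbb{R}^N\setminus\Omega$; you should also record that $\widetilde{\mathcal H}\in C^1(E_0,\mathbb R)$ with the expected derivative, which follows from $(A_2)$ and the compact embeddings exactly as for $\mathcal H$. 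With those details filled in, every step of your plan goes through.
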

\begin{proof}
In this section, we aim to prove Theorem \ref{thm3} by using Ekeland's Variational Principle.
To this aim, we need the following lemma.
\begin{lem}\label{lemm4.1}
There exists $\mu^* > 0$ such that for any
$\mu\in (0,\mu^*)$, $\nu>0$ there exist $\tau, b > 0$
such that $\mathcal{H}(v) \geq b> 0$ for any
$v\in E_0$ with $\|v\|_{E_0}=\tau$.
\end{lem}
\begin{proof}
By Proposition \ref{prp 3.3}, $E_0$ is continuously embedded in $L^{\alpha(x)}(\Omega)$, then there exists   a positive
constant C such that
\begin{equation}\label{geo1}
  \|v\|_{E_0}\leq C \|v\|_{L^{\alpha(x)}(\Omega)} ~~\text{for all}~ v\in E_0.
\end{equation}
Let us assume that $\|v\|_{E_0} <\min\left\{1,\frac{1}{C}\right\}$, where $C$ is given by \eqref{geo1}.  Using the H\"{o}lder's inequality and relation  \eqref{geo1},  we deduce that for any $v \in E_0$ with $\|v\|_{E_0} = \tau\in (0, 1)$ the following
inequalities hold true
\begin{align*}
\mathcal{H}(v) &= \widehat{\mathcal{K}}\left(\Lambda _{p,s}(v)\right)-\mu\int_\Omega\frac{\mathcal{\omega}_1(x)}{\alpha(x)}|v|^{\alpha(x)}dx+\nu \int_\Omega\frac{\mathcal{\omega}_2(x)}{\beta(x)}|v|^{\beta(x)}dx\\
&\geq \frac{k_1}{\theta \left(p^+\right)^{\theta}}\left(\rho_{s,p}(v)\right)^{\theta}
 -\frac{\mu}{\alpha^{-}}\int_{\Omega}\mathcal{\omega}_1(x)|v|^{\alpha(x)}dx\\
 &\geq \frac{k_1}{\theta \left(p^+\right)^{\theta}}\|v\|^{\theta p^+}_{E_0}
 - \frac{\mu}{\alpha^-}C^{\alpha^-}\|\mathcal{\omega}_1\|_{L^{\frac{m_1(x)}{\theta}}(\Omega)}\|v\|^{\alpha^-}_{E_0}\\
&= \frac{k_1}{\theta \left(p^+\right)^{\theta}}\tau^{\theta p^+}
 - \frac{\mu}{\alpha^-}C^{\alpha^-}\tau^{\alpha^-}\|\mathcal{\omega}_1\|_{L^{\frac{m_1(x)}{\theta}}(\Omega)}\\
&= \tau^{\alpha^-}\left( \frac{k_1}{\theta \left(p^+\right)^{\theta}}\tau^{\theta p^+-\alpha^-}
 - \frac{\mu}{\alpha^-}C^{\alpha^-}\|\mathcal{\omega}_1\|_{L^{\frac{m_1(x)}{\theta}}(\Omega)}\right).
 \end{align*}
Putting
\begin{equation}\label{e4.6}
\mu^* = \frac{k_1\alpha^-}{2\theta C^{\alpha^-}(p^+)^\theta\|\mathcal{\omega}_1\|_{L^{\frac{m_1(x)}{\theta}}(\Omega)}}\tau^{\theta p^+ - \alpha^-}.
\end{equation}
Consequently, for all $\mu\in(0,\mu^*)$ and $v\in E_0$ with $\|u\|_{E_0}= \tau$, there exists a positive constant
$b = \tau^{\theta p^+}/(2\theta(p^+)^{\theta})$ such that
$$
\mathcal{H}(v) \geq b > 0.
$$
This completes the proof.
\end{proof}
By Lemma~\ref{lemm4.1}, we have
\begin{equation}\label{e4.9}
\inf_{v\in \partial B_{\tau}(0)} \mathcal{H}(v) > 0,
\end{equation}
where $\partial B_{\tau}(0)=\{v\in E_0;~~ \|v\|_{E_0}=\tau\}$.\\
On the other hand, from Lemma \ref{lemm4.2}, there exists
$u_*\in E_0$ such that $\mathcal{H}(tu_*) < 0$ for $t>0$
 small enough.
Using the proof of Lemma \ref{lemm4.1}, it follows that
$$
\mathcal{H}(v) \geq \frac{k_1}{\theta \left(p^+\right)^{\theta}}\|v\|^{\theta p^+}_{E_0}
 - \frac{\mu}{\alpha^-}C^{\alpha^-}\|\mathcal{\omega}_1\|_{L^{\frac{m_1(x)}{\theta}}(\Omega)}\|v\|^{\alpha^-}_{E_0} \quad\text{for } v\in
B_{\tau}(0).
$$
Thus,
$$
-\infty < \overline{c}_{\mu}
:= \inf_{ \overline{B_{\tau}(o)}} \mathcal{H} < 0.
$$
Now let $\varepsilon$ be such that
$
 0 <\varepsilon < \inf_{\partial B_{\tau}(0)} \mathcal{H}
- \inf_{ \overline{B_{\tau}(0)}} \mathcal{H}.
$
 Then, by applying Ekeland's Variational Principle to the functional
$$
\mathcal{H}: \overline{B_{\tau}(0)}\to \mathbb{R},
$$
there exists $v_{\varepsilon} \in \overline{B_{\tau}(0)}$ such
that
\begin{gather*}
\mathcal{H} (v_{\varepsilon})
\leq \inf_{\overline{B_{\tau}(0)}}\mathcal{H} + \varepsilon,
\\
\mathcal{H} (v_{\varepsilon}) <  \mathcal{H}(v)
 + \varepsilon\|v-v_{\varepsilon}\|_{E_0} ~\text{ for } v\neq
v_{\varepsilon}.
\end{gather*}
Since $ \mathcal{H} (v_{\varepsilon})
< \inf_{ \overline{B_{\tau}(0)}} \mathcal{H}
+ \varepsilon < \inf_{ \partial B_{\tau}(0)} \mathcal{H}$,
we deduce that $v_{\varepsilon}\in B_{\tau}(0)$.\\
Now, we define $\mathcal{H}_1:\overline{B_{\tau}(0)}\to \mathbb{R}$
by
$$
\mathcal{H}_1(v) = \mathcal{H}(v) + \varepsilon\|v - v_{\varepsilon}\|_{E_0}.
$$
It is clear that $v_{\varepsilon}$ is an minimum of
$\mathcal{H}_1$. Therefore, for small $t>0$ and $u\in B_1(0)$, we have
$$
\frac{\mathcal{H}_1(v_{\varepsilon} + t u)
 - \mathcal{H}_1(v_{\varepsilon})}{t} \geq 0,
$$
which implies that
$$
\frac{\mathcal{H}(v_{\varepsilon} + t u)
- \mathcal{H}(v_{\varepsilon})}{t}
+ \varepsilon\|u\|_{E_0} \geq 0.
$$
As $t\to 0$, we obtain
$$
\langle \mathcal{H}'(v_{\varepsilon}), u\rangle
+ \varepsilon\|u\|_{E_0} \geq 0 \quad \text{for all }~ u\in B_1(0).
$$
Hence,
$\|\mathcal{H}'(v_{\varepsilon})\|_{E_0'} \leq \varepsilon$.
We deduce that there exists a sequence $(v_j)_j\subset B_{\tau}(0)$
such that
\begin{equation}\label{e4.10}
\mathcal{H}(v_j) \to \overline{c}_{\mu,\nu}<0 \quad\text{and}\quad
\mathcal{H}'(v_j) \to 0.
\end{equation}
It is clear that $(v_j)$ is bounded in $E_0$. By reflexivity of $E_0$, for subsequence still denoted $(v_j)$, we have $v_j\rightharpoonup v$ in $E_0$.\\
 Next, we show the strong convergence of $(v_j)$ in $E_0$.\\
  Claim:\\
 \begin{equation}\label{con1}
   \lim_{j\rightarrow +\infty}\int_{\Omega}\mathcal{\omega}_1(x)|v_j|^{\alpha(x)-2}v_j(v_j-v)dx=0,
 \end{equation}
  and
  \begin{equation}\label{con2}
 \lim_{j\rightarrow +\infty}\int_{\Omega}\mathcal{\omega}_2(x)|v_j|^{\beta(x)-2}v_j(v_j-v)dx=0.
 \end{equation}
 In fact, from the H\"{o}lder's type inequality, we have
 \begin{align*}
    &\int_{\Omega}\mathcal{\omega}_1(x)|v_j|^{\alpha(x)-2}v_j(v_j-v)dx\\
    &\leq \|\mathcal{\omega}_1\|_{L^{\frac{m_1(x)}{\theta}}(\Omega)}\left\||v_j|^{\alpha(x)-2}v_j(v_j-v)\right\|_{L^{\frac{m_1(x)\alpha(x)}{m_1(x)-\theta}}(\Omega)}\\
    &\leq \|\mathcal{\omega}_1\|_{L^{\frac{m_1(x)}{\theta}}(\Omega)} \left\||v_j|^{\alpha(x)-2}v_j\right\|_{L^{\frac{\alpha(x)}{\alpha(x)-1}}(\Omega)}\left\|v_j-v\right\|_
    {L^{\frac{m_1(x)\alpha(x)}{m_1(x)-\theta\alpha(x)}}(\Omega)}\\
    &\leq  \|\mathcal{\omega}_1\|_{L^{\frac{m_1(x)}{\theta}}(\Omega)}\left(1+\|v_j\|^{\alpha^+-1}_{L^{\alpha(x)}(\Omega)}\right)
   \left\|v_n-v\right\|_{L^{\frac{m_1(x)\alpha(x)}{m_1(x)-\theta\alpha(x)}}(\Omega)}.
 \end{align*}
Since $E_0$ is continuously embedded in $L^{\alpha(x)}(\Omega)$ and $(v_j)$ is bounded in $E_0$, so $(v_j)$ is bounded in $L^{\alpha(x)}(\Omega)$. On the other hand, since the embedding $E_0\hookrightarrow L^{\frac{m_1(x)\alpha(x)}{m_1(x)-\theta\alpha(x)}}(\Omega)$ is compact, we deduce $\left\|v_j-v\right\|_{L^{\frac{m_1(x)\alpha(x)}{m_1(x)-\theta\alpha(x)}}(\Omega)}\to 0$ as $j\rightarrow+\infty$.
Similarly, we get
$$
 \lim_{j\rightarrow +\infty}\int_{\Omega}\mathcal{\omega}_2(x)|v_j|^{\beta(x)-2}v_j(v_j-v)dx=0.
 $$
Hence, the proof of Claim  is complete.\\
Moreover, since $\mathcal{H}'(v_j) \to 0$ and $(v_j)$ is bounded in $E_0$, we have
\begin{align*}
    \left|\langle \mathcal{H}'(v_j),v_j-v\rangle\right|
     &\leq \left|\langle \mathcal{H}'(v_j),v_j\rangle\right|+\left|\langle \mathcal{H}'(v_j),v\rangle\right|\\
    &\leq \|\mathcal{H}'(v_j)\|_{E_0'}\|v_j\|_{E_0}+\|\mathcal{H}'(v_j)\|_{E_0'}\|v\|_{E_0},
    \end{align*}
that is,
$$
\lim_{j\rightarrow +\infty}\langle \mathcal{H}'(v_j),v_j-v\rangle=0.
$$
Therefore
\begin{equation}\label{Ec1}
  \begin{split}
    \lim_{j\rightarrow +\infty}& \left(\mathcal{K}(\Lambda_{p,s}(v_j))\int_{\mathcal{Q}} \frac{|v_j(x)-v_j(y)|^{p(x,y)-2}(v_j(x)-v_j(y))((v_j(x)-v(x))-(v_j(y)-v(y))}{|x-y|^{N+p(x,y)s(x,y)}}dxdy \right. \\
  &\left.-\mu \int_{\Omega}\mathcal{\omega}_1(x)|v_j|^{\alpha(x)-2}v_j(v_j-v)dx+\nu  \int_{\Omega}\mathcal{\omega}_2(x)|v_j|^{\beta(x)-2}v_j(v_j-v)dx\right)=0,
  \end{split}
\end{equation}
Combining this with relations \eqref{e4.10}-\eqref{con2} it follows that
 \begin{equation*}
   \mathcal{K}(\Lambda_{p,s}(v_j))\int_{\mathcal{Q}}\frac{|v_j(x)-v_j(y)|^{p(x,y)-2}(v_j(x)-v_j(y))((v_j(x)-v(x))-(v_j(y)-v(y))}{|x-y|^{N+p(x,y)s(x,y)}}dxdy\to 0.
 \end{equation*}
 Since $\{v_j\}$ is bounded in $E_0$, passing to subsequence, if necessary, we may assume that
\begin{equation*}
  \Lambda_{p,s}(v_j)\rightarrow d_1\geq 0, ~~ \text{as}~j \to +\infty.
\end{equation*}
If $d_1=0$, then $\{v_j\}$ converge strongly to $v=0$ in $E_0$ and the proof is finished.\\
If $d_1>0$, since the function $\mathcal{K}$ is continuous, we have
\begin{equation*}
  \mathcal{K}(\Lambda_{p,s}(v_j))\to \mathcal{K}(d_1)\geq0,~\text{as}~j\to \infty.
\end{equation*}
Then, by $(A_1)$, for $j$ large enough, we obtain
\begin{equation*}
  0<c_3<\mathcal{K}(\Lambda_{p,s}(v_j))<c_4.
\end{equation*}
It follows that
\begin{equation*}
   \int_{\mathcal{Q}}\frac{|v_j(x)-v_j(y)|^{p(x,y)-2}(v_j(x)-v_j(y))((v_j(x)-v(x))-(v_j(y)-v(y))}{|x-y|^{N+p(x,y)s(x,y)}}dxdy\to 0.
 \end{equation*}
 According to the fact that $\mathcal{T}$ satisfies condition $(S^+)$,  we conclude that $v_j\to v$ strongly in $E_0$.\\
Since $\mathcal{H}\in C^1(E_0,\mathbb{R})$, we have
\begin{equation}\label{con3}
  \mathcal{H}'(v_j)\to \mathcal{H}(v)~~\text{as}~j\to+\infty
\end{equation}
Relations \eqref{e4.10} and \eqref{con3} show that
$\mathcal{H}'(v)=0$ and thus $v$ is a weak solution for problem \eqref{main}. Moreover, by relation \eqref{e4.10}, it follows that $\mathcal{H}(v) < 0$ and thus, $v$ is a nontrivial weak solution for \eqref{main}.  The proof of Theorem \ref{thm3} is now completed.
\end{proof}

\end{document}